\documentclass[fleqn,11pt,letterpaper]{article}
\usepackage{amsfonts}
\usepackage{amssymb}
\usepackage{amsmath}
\usepackage{amsthm}
\usepackage{enumitem}
\usepackage{multirow}
\usepackage{float}
\usepackage{graphicx}
\usepackage{color}
\usepackage{stmaryrd}
\usepackage[margin=2.57cm]{geometry}
\definecolor{darkred}{rgb}{0.5,0.2,0.2}
\usepackage[pdftitle={Hybrid scheme for Brownian semistationary processes},pdfauthor={Mikkel Bennedsen, Asger Lunde and Mikko Pakkanen},colorlinks=TRUE,allcolors=darkred]{hyperref}
\usepackage{booktabs}

\usepackage{float}
\floatstyle{plaintop}
\restylefloat{table}

\setcounter{MaxMatrixCols}{10}

\restylefloat{table}

\numberwithin{equation}{section}

\theoremstyle{plain}
\newtheorem{theorem}{Theorem}[section]
\newtheorem{corollary}[theorem]{Corollary}
\newtheorem{lemma}[theorem]{Lemma}
\newtheorem{proposition}[theorem]{Proposition}
\theoremstyle{definition}
\newtheorem{example}[theorem]{Example}
\theoremstyle{remark}
\newtheorem{remark}[theorem]{Remark}

\newcommand{\N}{\mathbb{N}}

\newcommand{\R}{\mathbb{R}}

\newcommand{\E}{\mathbb{E}}
\newcommand{\BSS}{\mathcal{BSS}}

\newcommand\mikkel[1]{#1}

\def\bi{\begin{itemize}}
\def\ei{\end{itemize}}

\allowdisplaybreaks
\graphicspath{{Figures/}}

\newif\ifi

\linespread{1.15}

\begin{document}

\title{Hybrid scheme for Brownian semistationary processes}

\author{Mikkel Bennedsen\thanks{
Department of Economics and Business Economics and CREATES, 
Aarhus University, 
Fuglesangs All\'e 4,
8210 Aarhus V, Denmark.
E-mail:\
\href{mailto:mbennedsen@econ.au.dk}{\nolinkurl{mbennedsen@econ.au.dk}} 
} \and 
Asger Lunde\thanks{
Department of Economics and Business Economics and CREATES, 
Aarhus University, 
Fuglesangs All\'e 4,
8210 Aarhus V, Denmark.
E-mail:\ 
\href{mailto:alunde@econ.au.dk}{\nolinkurl{alunde@econ.au.dk}} 
} \and Mikko S. Pakkanen\thanks{
Department of Mathematics, 
Imperial College London, 
South Kensington Campus,
London SW7 2AZ, UK and CREATES, Aarhus University, Denmark.
E-mail:\ 
\href{mailto:m.pakkanen@imperial.ac.uk}{\nolinkurl{m.pakkanen@imperial.ac.uk}}
}}

\maketitle

\begin{abstract}
We introduce a simulation scheme for Brownian semistationary processes, which is based on discretizing the stochastic integral representation of the process in the time domain. We assume that the kernel function of the process is regularly varying at zero. The novel feature of the scheme is to approximate the kernel function by a power function near zero and by a step function elsewhere. The resulting approximation of the process is a combination of Wiener integrals of the power function and a Riemann sum, which is why we call this method a hybrid scheme. Our main theoretical result describes the asymptotics of the mean square error of the hybrid scheme and we observe that the scheme leads to a substantial improvement of accuracy compared to the ordinary forward Riemann-sum scheme, while having the same computational complexity.   
We exemplify the use of the hybrid scheme by two numerical experiments, where we 
examine the finite-sample properties of an estimator of the roughness parameter of a Brownian semistationary process and
study Monte Carlo option pricing in the rough Bergomi model of Bayer et al.\ \cite{BFG2015}, respectively.
\end{abstract}

\noindent {\bf Keywords}: Stochastic simulation; discretization; Brownian semistationary process; stochastic volatility; regular variation; estimation; option pricing; rough volatility; volatility smile.

\vspace*{1em}

\noindent {\bf JEL Classification}: C22, G13, C13

\vspace*{1em}

\noindent {\bf MSC 2010 Classification}: 60G12, 60G22, 65C20, 91G60, 62M09

\section{Introduction}

We study simulation methods for \emph{Brownian semistationary} ($\BSS$) processes, first introduced by Barndorff-Nielsen and Schmiegel \cite{ambitTC07,ole-jurgen09}, which form a flexible class of stochastic processes that are able to capture some common features of empirical time series, such as stochastic volatility (intermittency), roughness, stationarity and strong dependence. \mikkel{By now these processes have been applied} in various contexts, most notably in the study of turbulence in physics \cite{ole_mikko_jurgen13,jose_emil_mikko_mark13} and in finance as models of energy prices \cite{ole_fred_almut13,bennedsen15}. \mikkel{A} $\BSS$ process $X$ is defined via the integral representation
\begin{align}\label{eq:BSS1}
X(t) = \int_{-\infty}^t g(t-s)\sigma(s) dW(s),
\end{align}
where $W$ is a two-sided Brownian motion providing the fundamental noise innovations, the amplitude of which is modulated by a stochastic volatility (intermittency) process $\sigma$ that may depend on $W$. This driving noise is then convolved with a deterministic kernel function $g$ that specifies the \mikkel{dependence structure of $X$}. The process $X$ can also be viewed as a \emph{moving average} of volatility-modulated Brownian noise and setting $\sigma(s) = 1$, we see that stationary \emph{Brownian moving averages} are nested in this class of processes.

In the applications mentioned above, the \mikkel{case where $X$ is not a} semimartingale is particularly relevant. This situation arises when the kernel function $g$ behaves like a power-law near zero; more specifically, \mikkel{when} for some $\alpha \in ( -\frac{1}{2},\frac{1}{2}) \setminus \{0 \}$,
\begin{equation}\label{eq:powerlawatzero}
g(x) \propto x^\alpha \quad \textrm{for small $x>0$.}
\end{equation}
Here we write ``$\propto$'' to indicate proportionality in an \emph{informal} sense, anticipating a rigorous formulation of this relationship given in Section \ref{ssec:kernel} using the theory of \emph{regular variation} \cite{BGT}, which plays a significant role in our subsequent arguments. 
The case $\alpha = -\frac{1}{6}$ in \eqref{eq:powerlawatzero} is important in statistical modeling of turbulence \cite{jose_emil_mikko_mark13} as it gives rise to processes that are compatible with Kolmogorov's scaling law for ideal turbulence. Moreover, processes of similar type with $\alpha \approx - 0.4$ have been recently used in the context of option pricing as models of \emph{rough volatility} \cite{ALV2007,BFG2015,Fu2016,GJR2014}, see Sections \ref{sec:tbss} and \ref{sec:roughvol} below.
The case $\alpha=0$ would (roughly speaking) lead to a process that is a semimartingale, which is thus excluded.

Under \eqref{eq:powerlawatzero}, the trajectories of $X$ behave locally like the trajectories of a \emph{fractional Brownian motion} with Hurst index $H = \alpha + \frac{1}{2} \in (0,1) \setminus \{ \frac{1}{2} \}$. While the \emph{local} behavior and roughness, measured in terms of H\"older regularity, of $X$ are determined by the parameter $\alpha$, the \emph{global} behavior of $X$ (e.g., whether the process has long or short memory) depends on the behavior of $g(x)$ as $x \rightarrow \infty$, which can be specified independently of $\alpha$.
 This should be contrasted with fractional Brownian motion and related \emph{self-similar} models, which necessarily must conform to a restrictive affine relationship between their H\"older regularity  (local behavior and roughness) and  Hurst index (global behavior), as elucidated by Gneiting and Schlather \cite{gneiting04}. Indeed, in the realm of $\BSS$ processes, local and global behavior are conveniently decoupled, which underlines the flexibility of these processes as a modeling framework.
 
In connection with practical applications, it is important to be able to simulate the process $X$. If the volatility process $\sigma$ \mikkel{is deterministic} and constant in time, then $X$ \mikkel{will be} strictly stationary and Gaussian. \mikkel{This makes} $X$ amenable to exact simulation using the Cholesky factorization or circulant embeddings, see, e.g., \cite[Chapter XI]{AsmussenGlynn07}. However, it seems difficult, if not impossible, to develop an exact method \mikkel{that is applicable} with a stochastic $\sigma$, as the process $X$ is then neither Markovian nor Gaussian. 
Thus, in the general case \mikkel{one must} resort to approximative methods. To this end, Benth et al.\ \cite{heidar13fourier} have recently proposed a Fourier-based method of simulating $\mathcal{BSS}$ processes, and more general \emph{L\'evy semistationary} ($\mathcal{LSS}$) processes, which relies on approximating the kernel function $g$ in the frequency domain.

In this paper, we introduce a new discretization scheme for $\mathcal{BSS}$ processes based on approximating the kernel function $g$ in the time domain. Our starting point is the Riemann-sum discretization of \eqref{eq:BSS1}. The Riemann-sum scheme builds on an approximation of $g$ using step functions, which has the pitfall of failing to capture appropriately the steepness of $g$ near zero. In particular, this becomes a serious defect under \eqref{eq:powerlawatzero} when $\alpha \in ( -\frac{1}{2},0)$. In our new scheme, we mitigate this problem by approximating $g$ using an appropriate power function near zero and a step function elsewhere. The resulting discretization scheme can be realized as a linear combination of Wiener integrals with respect to the driving Brownian motion $W$ and a Riemann sum, which is why we call it a \emph{hybrid scheme}. The hybrid scheme is only slightly more demanding to implement than the Riemann-sum scheme and the schemes have the same computational complexity as the number of discretization cells tends to infinity.

Our main theoretical result describes the exact asymptotic behavior of the mean square error (MSE) of the hybrid scheme and, as a special case, that of the Riemann-sum scheme. We observe that switching from the Riemann-sum scheme to the hybrid scheme reduces the asymptotic root mean square error (RMSE) substantially. Using merely the simplest variant the of hybrid scheme, where a power function is used in a single discretization cell, the reduction is at least $50\%$ for all $\alpha \in ( 0,\frac{1}{2})$ and at least $80\%$ for all $\alpha \in ( -\frac{1}{2},0)$. The reduction in RMSE is close to $100\%$ as $\alpha$ approches $-\frac{1}{2}$, which indicates that the hybrid scheme indeed resolves the problem of poor precision that affects the Riemann-sum scheme.

To assess the accuracy of the hybrid scheme in practice, we perform two numerical experiments. Firstly, we examine the finite-sample performance of an estimator of the roughness index $\alpha$, introduced by Barndorff-Nielsen et al.\ \cite{ole_jose_mark13} and Corcuera et al.\ \cite{jose_emil_mikko_mark13}. This experiment enables us to assess how faithfully the hybrid scheme approximates the fine properties of the $\BSS$ process $X$. Secondly, we study Monte Carlo option pricing in the rough Bergomi stochastic volatility model of Bayer et al.\ \cite{BFG2015}. We use the hybrid scheme to simulate the volatility process in this model and we find that the resulting implied volatility smiles are indistinguishable from those simulated using a method that involves exact simulation of the volatility process. Thus we are able propose a solution to the problem of finding an efficient simulation scheme for the rough Bergomi model, left open in the paper \cite{BFG2015}.

The rest of this paper is organized as follows. In Section \ref{sec:model} we recall the rigorous definition of a $\BSS$ process and introduce our assumptions. We also introduce the hybrid scheme, state our main theoretical result concerning the asymptotics of the mean square error and discuss an extension of the scheme to a class of \emph{truncated} $\mathcal{BSS}$ processes. Section \ref{sec:app} briefly discusses the implementation of the discretization scheme and presents the numerical experiments mentioned above. Finally, Section \ref{sec:proofs} contains the proofs of the theoretical and technical results given in the paper.

\section{The model and theoretical results}\label{sec:model}

\subsection{Brownian semistationary process}

Let $(\Omega,\mathcal{F},\{\mathcal{F}_t\}_{t \in \R}, \mathbb{P})$ be a filtered probability space, satisfying the usual conditions, \mikkel{supporting a} (two-sided) standard Brownian motion $W=\{W(t)\}_{t \in \R}$. We consider a Brownian semistationary process
\begin{align}\label{eq:X}
X(t) = \int_{-\infty}^t g(t-s)\sigma(s) dW(s), \quad t \in \R,
\end{align}
where $\sigma = \{\sigma(t)\}_{t\in \R}$ is an $\{\mathcal{F}_t\}_{t \in \R}$-predictable process with locally bounded trajectories, which captures the stochastic volatility (intermittency) of $X$, and $g: (0,\infty) \to [0,\infty)$ is a Borel measurable kernel function.

 To ensure that the integral \eqref{eq:X} is well-defined, we assume that the kernel function $g$ is square integrable, that is, $\int_0^\infty g(x)^2 dx < \infty$. \mikkel{In fact, we will shortly introduce some more specific assumptions on $g$ that will imply its square integrability}. Throughout the paper, we also assume that the process $\sigma$ has finite second moments, $\E[\sigma(t)^2] < \infty$ for all $t \in \R$, and that the process is covariance stationary, namely,
 \begin{equation*}
\mathbb{E}[\sigma(s)] = \mathbb{E}[\sigma(t)],\quad  \mathrm{Cov}(\sigma(s),\sigma(t)) = \mathrm{Cov}(\sigma(0),\sigma(|s-t|)), \quad s,t \in \R.
 \end{equation*}
These assumptions imply that also $X$ is covariance stationary, \mikkel{that is},
\begin{equation*}
\mathbb{E}[X(t)] = 0, \quad \mathrm{Cov}(X(s),X(t)) = \E[\sigma(0)^2]\int_0^\infty g(x)g(x+|s-t|)dx, \quad s,t \in \R.
\end{equation*}
However, the process $X$ need not be \emph{strictly} stationary as the dependence between the volatility process $\sigma$ and the driving Brownian motion $W$ may be time-varying.

\subsection{Kernel function}\label{ssec:kernel}
As mentioned above, we consider a kernel \mikkel{function that} satisfies $g(x) \propto x^\alpha$ for some $\alpha \in (-\frac{1}{2},\frac{1}{2})\setminus \{0 \}$ when $x>0$ is near zero. To make this idea rigorous and to allow for additional flexibility, we formulate our assumptions on $g$ using the theory of regular variation \cite{BGT} and, more specifically, slowly varying functions.

 To this end, recall that a measurable function $L : (0,1] \rightarrow [0,\infty)$ is \emph{slowly varying} at $0$ if for any $t>0$,
\begin{align*}
\lim_{x \rightarrow 0} \frac{L(tx)}{L(x)} = 1.
\end{align*}
Moreover, a function $f(x) = x^\beta L(x)$, $x \in (0,1]$, where $\beta \in \R$ and $L$ is slowly varying at $0$, is said to be \emph{regularly varying} at $0$, with $\beta$ being the \emph{index of regular variation}. 

\begin{remark}
Conventionally, slow and regular variation are defined at $\infty$ \cite[pp.\ 6,\ 17--18]{BGT}. However, $L$ is slowly varying (resp.\ regularly varying) at $0$ if and only if $x \mapsto L(1/x)$ is slowly varying (resp.\ regularly varying) at $\infty$.
\end{remark}

A key feature of slowly varying functions, which will be very important in the sequel, is that they can be sandwiched between polynomial functions as follows. If $\delta>0$ and $L$ is slowly varying at $0$ and bounded away from $0$ and $\infty$ on any interval $(u,1]$, $u \in (0,1)$, then there exist constants $\overline{C}_{\delta} \geq \underline{C}_{\delta} >0$ such that 
\begin{align}\label{eq:potterzero}
\underline{C}_{\delta} x^{\delta} \leq L(x) \leq \overline{C}_{\delta} x^{-\delta}, \quad x \in (0,1].
\end{align}
The inequalities above are an immediate consequence of the so-called \emph{Potter bounds} for slowly varying functions, see \cite[Theorem 1.5.6(ii)]{BGT} and \eqref{eq:potterbounds} below. Making $\delta$ very small therein, we see that slowly varying functions are asymptotically negligible in comparison with polynomially growing/decaying functions. Thus, by multiplying power functions and slowly varying functions, regular variation provides a flexible framework to construct functions that behave asymptotically like power functions.

Our assumptions concerning the kernel function $g$ are as follows: 
\begin{enumerate}[label=(A\arabic*),ref=A\arabic*,leftmargin=3em]
\item \label{ass:zero}
For some $\alpha \in (-\frac{1}{2},\frac{1}{2}) \setminus \{0\}$,
\begin{align*}
g(x) = x^{\alpha}L_g(x),	\quad x \in (0,1],
\end{align*}
where $L_g : (0,1] \to [0,\infty)$ is continuously differentiable, slowly varying at $0$ and bounded away from $0$. Moreover, there exists a constant $C>0$ such that the derivative $L'_g$ of $L_g$ satisfies
\begin{align*}
	|L_g'(x)| \leq C(1+x^{-1}), \quad x\in (0,1].
\end{align*}
\item \label{ass:deriv}
	The function $g$ is continuously differentiable on $(0,\infty)$, so that its derivative $g'$ is ultimately monotonic and satisfies $\int_1^{\infty}g'(x)^2dx<\infty$.
\item\label{ass:infb} For some $\beta \in (-\infty,-\frac{1}{2})$,
\begin{equation*}
g(x) = \mathcal{O}(x^{\beta}), \quad x \rightarrow \infty.
\end{equation*}
\end{enumerate}
(Here, and in the sequel, we use $f(x) = \mathcal{O}(h(x))$, $x \rightarrow a$, to indicate that $\limsup_{x \rightarrow a} \big|\frac{f(x)}{h(x)}\big| < \infty$. Additionally, analogous notation is later used for sequences and computational complexity.)
In view of the bound \eqref{eq:potterzero}, these assumptions ensure that $g$ is square integrable. It is worth pointing out that \eqref{ass:zero} accommodates functions $L_g$ with $\lim_{x \rightarrow 0} L_g(x) = \infty$, e.g., $L_g(x) = 1 - \log x$.

The assumption \eqref{ass:zero} influences the short-term behavior and roughness of the process $X$. A simple way to assess the roughness of $X$ is to study the behavior of its \emph{variogram} (also called the \emph{second-order structure function} in turbulence literature)
\begin{equation*}
V_X(h) := \mathbb{E}[|X(h) - X(0)|^2], \quad h \geq 0,
\end{equation*}
as $h \rightarrow 0$. Note that, by covariance stationarity,
\begin{equation*}
V_X(|s-t|) = \mathbb{E}[|X(s) - X(t)|^2], \quad s,t \in \R.
\end{equation*}
Under our assumptions, we have the following characterization of the behavior of $V_X$ near zero, which generalizes a result of Barndorff-Nielsen \cite[p.\ 9]{olegamma12}
and implies that $X$ has a locally H\"older continuous modification. Therein, and in what follows, we write $a(x) \sim b(x)$, $x \rightarrow y$, to indicate that $\lim_{x \rightarrow y} \frac{a(x)}{b(x)} = 1$.
The proof of this result is carried out in Section \ref{ssec:proofs1}.

\begin{proposition}[Local behavior and continuity]\label{prop:localbehavior}
Suppose that \eqref{ass:zero}, \eqref{ass:deriv} and \eqref{ass:infb} hold.
\begin{enumerate}[label=(\roman*),ref=\roman*,leftmargin=*]
\item\label{item:vario} The variogram of $X$ satisfies
\begin{equation*}
V_X(h) \sim \E[\sigma(0)^2] \bigg(\frac{1}{2\alpha + 1} + \int_0^\infty \big( (y+1)^\alpha - y^\alpha\big)^2 dy\bigg) h^{2\alpha+1} L_g(h)^2, \quad h \rightarrow 0,
\end{equation*}
which implies that $V_X$ is regularly varying at zero with index $2\alpha+1$.
\item\label{item:holder} The process $X$ has a modification with locally $\phi$-H\"older continuous trajectories for any $\phi \in (0,\alpha + \frac{1}{2})$.
\end{enumerate}
\end{proposition}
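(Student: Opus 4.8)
The plan is to reduce the random variogram $V_X(h)$ to deterministic integrals of $g$, and then apply the theory of regular variation. Writing
\begin{equation*}
X(h)-X(0)=\int_{-\infty}^{h}\Big(g(h-s)\mathbf 1_{\{0<s\le h\}}+\big(g(h-s)-g(-s)\big)\mathbf 1_{\{s\le 0\}}\Big)\sigma(s)\,dW(s),
\end{equation*}
applying the It\^o isometry (the integrand being predictable and square-integrable by \eqref{ass:zero}, \eqref{ass:infb} and \eqref{eq:potterzero}), and using covariance stationarity of $\sigma$ together with Tonelli's theorem to pull $\E[\sigma(s)^2]=\E[\sigma(0)^2]$ out of the integral, one obtains after the substitutions $x=h-s$ and $x=-s$ the identity
\begin{equation*}
V_X(h)=\E[\sigma(0)^2]\bigg(\int_0^h g(x)^2\,dx+\int_0^\infty\big(g(x+h)-g(x)\big)^2\,dx\bigg),\qquad h>0.
\end{equation*}

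For the first term, $x\mapsto x^{2\alpha}L_g(x)^2$ is regularly varying at $0$ with index $2\alpha>-1$ (a square of a slowly varying function is slowly varying), so Karamata's theorem gives $\int_0^h g(x)^2\,dx\sim\frac{1}{2\alpha+1}\,h^{2\alpha+1}L_g(h)^2$. For the second term I would fix a small $\varepsilon\in(0,\tfrac12)$ and split the integral at $\varepsilon$. On $(\varepsilon,\infty)$, Cauchy--Schwarz and \eqref{ass:deriv} bound the integral by $h^2\int_{\varepsilon}^\infty g'(x)^2\,dx=\mathcal{O}(h^2)$, which is negligible compared with $h^{2\alpha+1}L_g(h)^2$ since $1-2\alpha>0$ and $L_g$ is slowly varying. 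On $(0,\varepsilon)$, for $h<\varepsilon$ both $x$ and $x+h$ lie in $(0,1)$, so the substitution $x=hy$ turns the integral into
\begin{equation*}
h^{2\alpha+1}L_g(h)^2\int_0^{\varepsilon/h}\bigg((y+1)^\alpha\frac{L_g(h(y+1))}{L_g(h)}-y^\alpha\frac{L_g(hy)}{L_g(h)}\bigg)^{\!2}dy,
\end{equation*}
and part \eqref{item:vario} reduces to showing that this last integral converges to $\int_0^\infty\big((y+1)^\alpha-y^\alpha\big)^2\,dy$ as $h\to0$. The integrand converges pointwise to $\big((y+1)^\alpha-y^\alpha\big)^2$ by slow variation of $L_g$, so it suffices to exhibit an $h$-independent integrable majorant and invoke dominated convergence. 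This is where the quantitative parts of \eqref{ass:zero} are used: Potter bounds for $L_g$ control the ratios $L_g(hy)/L_g(h)$ and $L_g(h(y+1))/L_g(h)$ near $y=0$, yielding an $\mathcal{O}(y^{2\alpha-2\delta})$ majorant there, integrable once $\delta<\alpha+\tfrac12$; for large $y$ one writes the integrand's base as $\big((y+1)^\alpha-y^\alpha\big)\frac{L_g(h(y+1))}{L_g(h)}+y^\alpha\frac{L_g(h(y+1))-L_g(hy)}{L_g(h)}$ and bounds the second summand using $|L_g'(x)|\le C(1+x^{-1})$, the fact that $L_g$ is bounded away from $0$, and the inequality $h<\varepsilon/y$ valid on the domain of integration, obtaining an $\mathcal{O}(y^{2\alpha-2})$-type majorant that is square-integrable at infinity since $\alpha<\tfrac12$. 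Adding the contributions of $\int_0^h g^2$ and $\int_0^\varepsilon(g(\cdot+h)-g(\cdot))^2$ gives part \eqref{item:vario}, and regular variation of $V_X$ at $0$ with index $2\alpha+1$ follows at once.

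For part \eqref{item:holder}, part \eqref{item:vario} and the Potter bound \eqref{eq:potterzero} give $\E[|X(t)-X(s)|^2]=V_X(|t-s|)\le C_\delta|t-s|^{2\alpha+1-\delta}$ for every $\delta>0$ and small $|t-s|$. Since $X$ is not Gaussian in general (as $\sigma$ may depend on $W$), a direct chaining argument is unavailable, and I would instead bootstrap to $L^{2p}$-estimates: after a localization reducing the recent-past part $\int_{t-\varepsilon}^t$ of the integral to the case of bounded $\sigma$ — the remote part $\int_{-\infty}^{t-\varepsilon}$ being a convolution against the $C^1$ tail of $g$ (via \eqref{ass:deriv}) and contributing a more regular, Lipschitz-type term — the Burkholder--Davis--Gundy inequality yields $\E[|X(t)-X(s)|^{2p}]\le C_{p,\delta}|t-s|^{p(2\alpha+1-\delta)}$ on compacts. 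The Kolmogorov--Chentsov theorem then provides a modification that is locally $\phi$-H\"older for every $\phi<\frac{2\alpha+1-\delta}{2}-\frac{1}{2p}$; letting $p\to\infty$, $\delta\to0$ and patching over the increasing family of localizing events covers all $\phi\in(0,\alpha+\tfrac12)$.

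I expect the main obstacle to be the second integral in part \eqref{item:vario}: identifying the exact constant $\int_0^\infty\big((y+1)^\alpha-y^\alpha\big)^2\,dy$ requires passing the limit inside an integral whose upper endpoint $\varepsilon/h$ diverges, with a delicate integrand near both $y=0$ and $y=\infty$ — precisely the regime where mere slow variation of $L_g$ is insufficient and the explicit bounds on $L_g$ and $L_g'$ in \eqref{ass:zero} are needed. A secondary difficulty, for part \eqref{item:holder}, is reconciling the sharp exponent $\alpha+\tfrac12$ with the assumption of only second moments on $\sigma$, which is what forces the recent-past/remote-past decomposition and the localization just described.
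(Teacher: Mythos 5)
Your proposal is correct and follows essentially the same route as the paper: part (i) is proved there by exactly your steps (the It\^o-isometry variogram formula, Karamata's theorem for $\int_0^h g(x)^2dx$, a split of the difference integral with the far part shown to be $\mathcal{O}(h^2)$ via (A2), and dominated convergence after the substitution $x=hy$, with the majorant built from the Potter bounds and the bound on $L_g'$ from (A1)), the only cosmetic difference being that the paper splits at $1-h$ rather than at a fixed $\varepsilon$. For part (ii) the paper uses the same localize-then-Kolmogorov--Chentsov strategy with $p\to\infty$, except that the localization is made explicit through stopping times built from $\sigma$ and the auxiliary functional $F(t)=\int_1^\infty g'(s)^2\sigma(t-s)^2\,ds$ --- which is precisely the control needed to make your ``remote part is Lipschitz-type'' step rigorous --- and the moment bound $\mathbb{E}\big[|X^\dag(s)-X^\dag(t)|^p\big]\le \hat C_p V_X(|s-t|)^{p/2}$ is imported from a cited lemma of Barndorff-Nielsen et al.\ rather than rederived via BDG as you sketch.
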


Motivated by Proposition \ref{prop:localbehavior}, we call $\alpha$ the \emph{roughness index} of the process $X$.
Ignoring the slowly varying factor \mikkel{$L_g(h)^2$  in} \eqref{prop:localbehavior}, we see that the variogram $V(h)$ behaves like $h^{2\alpha+1}$ for small values of $h$, which is reminiscent of the scaling property of the increments of a fractional Brownian motion (fBm) with Hurst index $H = \alpha + \frac{1}{2}$. Thus, the process $X$ behaves locally like such an fBm, at least when it comes to second order structure and roughness. (Moreover, the factor $\frac{1}{2\alpha + 1} + \int_0^\infty ( (y+1)^\alpha - y^\alpha)^2 dy$ coincides with the normalization coefficient that appears in the Mandelbrot--Van Ness representation \cite[Theorem 1.3.1]{Mishura2008} of an fBm with $H = \alpha + \frac{1}{2}$.)

Let us now look at two examples of a kernel function $g$ that satisfies our assumptions.

\begin{example}[The gamma kernel]\label{ex:gamma}
The so-called \emph{gamma kernel} 
\begin{align*}
g(x) = x^{\alpha} e^{-\lambda x}, \quad x \in (0,\infty),
\end{align*}
with parameters $\alpha \in (-\frac{1}{2},\frac{1}{2}) \setminus \{ 0 \}$ and $\lambda > 0$, has been used extensively in the literature on $\mathcal{BSS}$ processes. It is particularly important in connection with statistical modeling of turbulence, see Corcuera et al.\ \cite{jose_emil_mikko_mark13}, but it also provides a way to construct generalizations of Ornstein--Uhlenbeck (OU) processes with roughness that differs from the usual semimartingale case $\alpha = 0$, while mimicking the long-term behavior of an OU process. Moreover, $\mathcal{BSS}$ and $\mathcal{LSS}$ processes defined using the gamma kernel have interesting probabilistic properties, see \cite{PS2015}. An in-depth study of the gamma kernel can be found in \cite{olegamma12}. Setting $L_g (x) := e^{-\lambda x}$, which is slowly varying at $0$ since $\lim_{x \rightarrow 0} L_g(x) = 1$, it is evident that \eqref{ass:zero} holds. Since $g(x)$ decays exponentially fast to $0$ as $x \rightarrow \infty$, it is clear that also \eqref{ass:infb} holds.
To verify \eqref{ass:deriv}, note that $g$ satisfies 
\begin{align*}
g'(x) = \bigg( \frac{\alpha}{x}-\lambda\bigg) g(x), \quad g''(x) = \Bigg(\bigg(\frac{\alpha}{x}-\lambda\bigg)^2- \frac{\alpha}{x^2}\Bigg) g(x), \quad x \in (0,\infty),
\end{align*}
where $\lim_{x \rightarrow \infty }((\frac{\alpha}{x}-\lambda)^2- \frac{\alpha}{x^2})= \lambda^2>0$, so $g'$ is ultimately increasing with 
\begin{align*}
g'(x)^2 \leq (|\alpha| + \lambda)^2 g(x)^2, \quad x \in [1,\infty). 
\end{align*}
Thus, $\int_1^\infty g'(x)^2 dx < \infty$ since $g$ is square integrable.
\end{example}

\begin{example}[Power-law kernel]
Consider the kernel function
\begin{align*}
g(x) = x^\alpha (1+x)^{\beta-\alpha}, \quad x \in (0,\infty),
\end{align*}
with parameters $\alpha \in (-\frac{1}{2},\frac{1}{2})\setminus \{0\}$ and $\beta \in (-\infty,-\frac{1}{2})$. The behavior of this kernel function near zero is similar to that of the gamma kernel, but $g(x)$ decays to zero polynomially as $x \rightarrow \infty$, so it can be used to model long memory. In fact, it can be shown that if $\beta \in (-1,-\frac{1}{2})$, then the autocorrelation function of $X$ is not integrable. Clearly, \eqref{ass:zero} holds with $L_g(x) := (1+x)^{\beta-\alpha}$, which is slowly varying at $0$ since $\lim_{x \rightarrow 0} L_g(x) = 1$. Moreover, note that we can write
\begin{equation*}
g(x) = x^\beta K_g(x), \quad x \in (0,\infty),
\end{equation*}
where $K_g(x):=(1+x^{-1})^{\beta-\alpha}$ satisfies $\lim_{x \rightarrow \infty}K_g(x)=1$. Thus, also \eqref{ass:infb} holds. We can check \eqref{ass:deriv} by computing
\begin{equation*}
g'(x) = \bigg( \frac{\alpha+\beta x}{x(1+x)} \bigg)g(x), \quad g''(x) = \Bigg(\bigg(\frac{\alpha+\beta x}{x(1+x)}\bigg)^2 + \frac{-\alpha-2\alpha x- \beta x^2}{x^2(1+x)^2}\Bigg) g(x), \quad x \in (0,\infty),
\end{equation*}
where $-\alpha-2\alpha x- \beta x^2 \rightarrow \infty$ when $x \rightarrow \infty$ (as $\beta < -\frac{1}{2}$), so $g'$ is ultimately increasing. Additionally, we note that
\begin{equation*}
g'(x)^2 \leq (|\alpha|+|\beta|)^2 g(x)^2, \quad x \in [1,\infty),
\end{equation*}
implying $\int_1^\infty g'(x)^2 dx < \infty$ since $g$ is square integrable.
\end{example}

\subsection{Hybrid scheme}

Let $t \in \R$ and consider discretizing $X(t)$ based on its integral representation \eqref{eq:X} on the grid $\mathcal{G}_n(t) := \{t,t-\frac{1}{n}, t-\frac{2}{n},\ldots\}$ for $n \in \N$. To derive our discretization scheme, let us first note that if the volatility process $\sigma$ does not vary too much, then it is reasonable to use the approximation
\begin{equation}\label{eq:sigmablock}
X(t) = \sum_{k = 1}^\infty  \int_{t-\frac{k}{n}}^{t-\frac{k}{n}+\frac{1}{n}} g(t-s) \sigma (s) dW(s)  \approx \sum_{k = 1}^\infty \sigma \bigg(t-\frac{k}{n}\bigg) \int_{t-\frac{k}{n}}^{t-\frac{k}{n}+\frac{1}{n}} g(t-s)  dW(s),
\end{equation}
that is, we keep $\sigma$ constant in each discretization cell. (Here, and in the sequel, ``$\approx$'' stands for an informal approximation used for purely heuristic purposes.)
If $k$ is ``small'', then due to \eqref{ass:zero} we may approximate
\begin{equation}\label{eq:smallapp}
g(t-s) \approx (t-s)^\alpha L_g\bigg( \frac{k}{n} \bigg),\quad t-s \in \bigg[\frac{k-1}{n},\frac{k}{n}\bigg]\setminus \{ 0 \},
\end{equation}
as the slowly varying function $L_g$ varies ``less'' than the power function $y \mapsto y^\alpha$ near zero, cf.\ \eqref{eq:potterzero}. If $k$ is ``large'', or at least $k \geq 2$, then choosing $b_k \in [k-1,k]$ provides an adequate approximation
\begin{equation}\label{eq:largeapp}
g(t-s) \approx g\bigg(\frac{b_k}{n}\bigg),\quad t-s \in \bigg[\frac{k-1}{n},\frac{k}{n}\bigg],
\end{equation}
by \eqref{ass:deriv}.
 Applying \eqref{eq:smallapp} to the first $\kappa$ terms, where $\kappa = 1,2,\ldots$, and \eqref{eq:largeapp} to the remaining terms in the approximating series in \eqref{eq:sigmablock} yields
\begin{equation}\label{eq:combapp}
\begin{split}
\sum_{k = 1}^\infty \sigma \bigg(t-\frac{k}{n}\bigg) \int_{t-\frac{k}{n}}^{t-\frac{k}{n}+\frac{1}{n}} g(t-s)  dW(s) &  \approx  \sum_{k = 1}^\kappa L_g \bigg( \frac{k}{n}\bigg) \sigma \bigg(t-\frac{k}{n}\bigg)  \int_{t-\frac{k}{n}}^{t-\frac{k}{n}+\frac{1}{n}} (t-s)^\alpha  dW(s) \\ &
\quad + \sum_{k = \kappa + 1}^\infty g\bigg(\frac{b_k}{n}\bigg) \sigma \bigg(t-\frac{k}{n}\bigg) \int_{t-\frac{k}{n}}^{t-\frac{k}{n}+\frac{1}{n}}  dW(s),
\end{split}
\end{equation}
For completeness, we also allow for $\kappa = 0$, in which case we require that $b_1 \in (0,1]$ and interpret the first sum on the right-hand side of \eqref{eq:combapp} as zero.
To make numerical implementation feasible, we truncate the second sum on the right-hand side of \eqref{eq:combapp} so that both sums have $N_n \geq \kappa+1$ terms in total. Thus, we arrive at a discretization scheme for $X(t)$, which we call a \emph{hybrid scheme}, given by
\begin{equation*}
X_n(t) := \check{X}_n(t) + \hat{X}_n(t),
\end{equation*}
where
\begin{align}
\check{X}_n(t) & := \sum_{k=1}^\kappa L_g\bigg( \frac{k}{n} \bigg) \sigma\bigg( t-\frac{k}{n}\bigg) \int_{t-\frac{k}{n}}^{t-\frac{k}{n}+\frac{1}{n}} (t-s)^\alpha dW(s),\label{eq:checkX} \\
\hat{X}_n(t) & := \sum_{k=\kappa+1}^{N_n} g\bigg(\frac{b_k}{n}\bigg)\sigma\bigg(t- \frac{k}{n}\bigg)\Bigg( W\bigg(t-\frac{k}{n}+\frac{1}{n}\bigg) - W\bigg(t-\frac{k}{n}\bigg)\Bigg),\label{eq:hatX}
\end{align}
and $\mathbf{b}:=\{b_k\}_{k=\kappa+1}^{\infty}$ is a sequence of real numbers, evaluation points, that must satisfy $b_k \in [k-1,k]\setminus \{0\}$ for each $k\geq \kappa+1$, but otherwise can be chosen freely.

As it stands, the discretization grid $\mathcal{G}_n(t)$ depends on the time $t$, which may seem cumbersome with regard to sampling $X_n(t)$ simultaneously for different times $t$. However, note that whenever times $t$ and $t'$ are separated by a multiple of $\frac{1}{n}$, the corresponding grids $\mathcal{G}_n(t)$ and $\mathcal{G}_n(t')$ will intersect. In fact the hybrid scheme defined by \eqref{eq:checkX} and \eqref{eq:hatX} can be implemented efficiently, as we shall see in Section \ref{ssec:impl}, below.
 Since
\begin{equation*}
g\bigg(\frac{b_k}{n}\bigg) = g\bigg(t-\bigg(t-\frac{b_k}{n}\bigg)\bigg),
\end{equation*}
the degenerate case $\kappa=0$ with $b_k = k$ for all $k \geq 1$ corresponds to the usual Riemann-sum discretization scheme of $X(t)$ with (It\=o type) forward sums from \eqref{eq:hatX}. Henceforth, we denote the associated sequence $\{ k\}_{k=\kappa+1}^\infty$ by $\mathbf{b}_{\mathrm{FWD}}$, where the subscript ``$\mathrm{FWD}$'' alludes to forward sums.
 However, including terms involving Wiener integrals of a power function given by \eqref{eq:checkX}, that is having $\kappa \geq 1$, improves the accuracy of the discretization considerably, as we shall see. Having the leeway to select $b_k$ within the interval $[k-1,k]\setminus \{ 0 \}$, so that the function $g(t-\cdot)$ is evaluated at a point that does not necessarily belong to $\mathcal{G}_n(t)$, leads additionally to a moderate improvement.

The trunction in the sum \eqref{eq:hatX} entails that the stochastic integral \eqref{eq:X} defining $X$ is truncated at $t - \frac{N_n}{n}$.
In practice, the value of the parameter $N_n$ should be large enough to mitigate the effect of truncation. To ensure that the truncation point $t - \frac{N_n}{n}$ tends to $-\infty$ as $n \rightarrow \infty$ in our asymptotic results, we introduce the following assumption:
\begin{enumerate}[label=(A\arabic*),ref=A\arabic*,leftmargin=*,resume]
\item\label{ass:truncation} For some $\gamma>0$,
\begin{align*}
N_n \sim n^{\gamma + 1},\quad n \rightarrow \infty.
\end{align*}
\end{enumerate}

\subsection{Asymptotic behavior of mean square error}

We are now \mikkel{ready} to state our main theoretical result, which gives a sharp description of the asymptotic behavior of the mean square error (MSE) of the hybrid scheme as $n \rightarrow \infty$. We defer the proof of this result to Section \ref{ssec:proofs2}.

\begin{theorem}[Asymptotics of mean square error]\label{th:mainTh}
Suppose that \eqref{ass:zero}, \eqref{ass:deriv}, \eqref{ass:infb} and \eqref{ass:truncation} hold, so that
\begin{align}\label{eq:gammaasymp}
\gamma > -\frac{2\alpha+1}{2\beta +1},
\end{align}
and that for some $\delta>0$,
\begin{align}\label{eq:sigmab1}
\mathbb{E}[|\sigma(s) - \sigma(0)|^2] = \mathcal{O}\big(s^{2\alpha +1+\delta}\big),  \quad s\downarrow 0.
\end{align}
Then for all $t\in \mathbb{R}$,
\begin{align}\label{eq:rate1}
\mathbb{E}[|X(t) - X_n(t)|^2] \sim J(\alpha,\kappa,\mathbf{b}) \mathbb{E}[\sigma(0)^2] n^{-(2\alpha +1)} L_g(1/n)^2 , \quad n\rightarrow \infty,
\end{align}
where 
\begin{align}\label{eq:Jdef}
J(\alpha,\kappa,\mathbf{b}) := \sum_{k=\kappa+1}^{\infty} \int_{k-1}^k (y^{\alpha} - b_k^{\alpha})^2dy < \infty.
\end{align}
\end{theorem}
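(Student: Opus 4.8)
The plan is to decompose the total error $X(t) - X_n(t)$ into three pieces: (i) the error from freezing $\sigma$ in each discretization cell, (ii) the error from approximating $g$ by a power function on the first $\kappa$ cells and by the constant $g(b_k/n)$ on cells $k = \kappa+1,\dots,N_n$, and (iii) the error from truncating the integral at $t - N_n/n$. Writing everything as a Wiener integral against $dW$, the plan is to show that pieces (i) and (iii), after normalization by $n^{-(2\alpha+1)}L_g(1/n)^2$, vanish as $n\to\infty$, while piece (ii) carries the entire limit and produces the constant $J(\alpha,\kappa,\mathbf{b})\,\E[\sigma(0)^2]$.

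First I would treat the dominant term. After freezing $\sigma$, the $g$-approximation error on cells $k \geq \kappa+1$ contributes, by the It\^o isometry and covariance stationarity of $\sigma$,
\begin{equation*}
\E[\sigma(0)^2] \sum_{k=\kappa+1}^{N_n} \int_{t-k/n}^{t-k/n+1/n} \Big( g(t-s) - g(b_k/n)\Big)^2 ds,
\end{equation*}
plus cross terms that I would need to control separately. Substituting $t-s = y/n$ turns the $k$-th integral into $n^{-1}\int_{k-1}^k \big(g(y/n) - g(b_k/n)\big)^2 dy$. On the cells with $k/n \le 1$ I would use \eqref{ass:zero} to write $g(y/n) = (y/n)^\alpha L_g(y/n)$, extract the factor $n^{-2\alpha}L_g(1/n)^2$ using the uniform convergence theorem for slowly varying functions (so that $L_g(y/n)/L_g(1/n) \to 1$ uniformly for $y \in [k-1,k]$), and obtain a contribution $\sim n^{-(2\alpha+1)}L_g(1/n)^2 \int_{k-1}^k (y^\alpha - b_k^\alpha)^2 dy$. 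The Potter bounds \eqref{eq:potterzero} give the domination needed to sum over $k$ and to pass the limit inside the (infinite, after letting $N_n\to\infty$) sum, which is where condition \eqref{ass:infb} together with \eqref{eq:gammaasymp} ensures the tail cells $k/n > 1$ and the truncated tail $k > N_n$ are asymptotically negligible relative to $n^{-(2\alpha+1)}L_g(1/n)^2$ — here I would use \eqref{ass:deriv} (mean value theorem on $g'$, square integrability of $g'$ on $[1,\infty)$) to bound the $g$-approximation error on the far cells. Finiteness of $J(\alpha,\kappa,\mathbf{b})$ itself follows since $(y^\alpha - b_k^\alpha)^2 = \mathcal{O}(k^{2\alpha-2})$ uniformly over $b_k\in[k-1,k]$ for $k$ large, which is summable as $2\alpha - 2 < -1$.

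Next I would dispatch the $\sigma$-freezing error. Its $L^2$-norm is controlled, again by It\^o isometry, by terms of the form $\int g(t-s)^2 \E[|\sigma(s) - \sigma(t-k/n)|^2]\,ds$ over the cells; by stationarity and \eqref{eq:sigmab1} the modulus-of-continuity bound $\E[|\sigma(s)-\sigma(0)|^2] = \mathcal{O}(s^{2\alpha+1+\delta})$ gives an extra factor $n^{-(2\alpha+1+\delta)}$ on the near cells, which beats the required rate by $n^{-\delta}$; on the far cells $g$ is bounded and square integrable so summability is immediate, and \eqref{ass:truncation} sends the truncation residual to zero. The cross terms between the $\sigma$-freezing error and the $g$-approximation error are handled by Cauchy--Schwarz against the two pieces already bounded. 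I expect the main obstacle to be the uniformity in the interchange of limit and infinite sum for the dominant term: one must combine the Potter bound $L_g(y/n) \le \overline{C}_\delta (y/n)^{-\delta}$ with the explicit decay $(y^\alpha - b_k^\alpha)^2 = \mathcal{O}(k^{2\alpha - 2})$ on the middle-range cells ($1 < k/n$, i.e. using \eqref{ass:infb} and \eqref{ass:deriv} there instead), and verify that the chosen $\gamma$ in \eqref{ass:truncation} via \eqref{eq:gammaasymp} is exactly what makes the truncated-tail contribution $\sum_{k>N_n} \int_{k-1}^k g(y/n)^2 dy$ small compared to $n^{-(2\alpha+1)}L_g(1/n)^2$ — a dominated-convergence argument with a carefully chosen majorant, uniform in $n$.
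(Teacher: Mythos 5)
Your overall architecture coincides with the paper's: freeze $\sigma$ cell-by-cell (controlled via \eqref{eq:sigmab1} and $\int_0^\infty g^2<\infty$), let the step-approximation error on the cells $\kappa+1\le k\le n$ carry the limit, dispose of the cells with $k/n>1$ by the mean value theorem and $\int_1^\infty g'(x)^2\,dx<\infty$ from \eqref{ass:deriv}, and of the truncation tail via \eqref{ass:infb} and \eqref{ass:truncation} through \eqref{eq:gammaasymp}; your Cauchy--Schwarz treatment of the cross terms is an acceptable substitute for the Minkowski sandwich the paper uses, and your summability bound $(y^\alpha-b_k^\alpha)^2=\mathcal{O}(k^{2\alpha-2})$ for the finiteness of $J$ is the same as the paper's.

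There is, however, a genuine gap at precisely the step you single out as the main obstacle: the $n$-uniform summable majorant for the dominant sum. After extracting $n^{-(2\alpha+1)}L_g(1/n)^2$, the $k$-th summand is
\begin{equation*}
A_{n,k}=\int_{k-1}^{k}\bigg(y^{\alpha}\frac{L_g(y/n)}{L_g(1/n)}-b_k^{\alpha}\frac{L_g(b_k/n)}{L_g(1/n)}\bigg)^2dy ,
\end{equation*}
and you propose to dominate it by combining the Potter bound with the decay $(y^\alpha-b_k^\alpha)^2=\mathcal{O}(k^{2\alpha-2})$. That combination only controls the piece $\int_{k-1}^k (y^\alpha-b_k^\alpha)^2\big(L_g(y/n)/L_g(1/n)\big)^2dy$; it tacitly treats $L_g(y/n)$ and $L_g(b_k/n)$ as equal. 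The remaining piece, of size $b_k^{2\alpha}\int_{k-1}^k\big((L_g(y/n)-L_g(b_k/n))/L_g(1/n)\big)^2dy$, is \emph{not} controlled by Potter bounds: \eqref{eq:potterbounds} only confines each ratio to $[C_\delta^{-1}k^{-\delta},\,C_\delta k^{\delta}]$, so the within-cell oscillation of $L_g$ could a priori make this term of order $k^{2\alpha+2\delta}$, which is not summable, and the uniform convergence theorem for slow variation does not help because the base points $b_k/n$ range over all of $(0,1]$ as $k$ runs up to $n$. This is exactly where the paper invokes the hypothesis $|L_g'(x)|\le C(1+x^{-1})$ in \eqref{ass:zero} — a hypothesis your argument never uses: by the mean value theorem it yields $|L_g(y/n)-L_g(b_k/n)|\le 2C/(k-1)$ for $k\le n$, supplying the extra factor $k^{-2}$ and a majorant $A_k\lesssim k^{2(\alpha-1+\delta)}+k^{2(\alpha-1)}$ that is summable. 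Two smaller omissions: the uniform convergence of $L_g(y/n)/L_g(1/n)$ on the first cell ($k=1$, when $\kappa=0$) fails near $0$, and the negligibility of the power-approximation error on the first $\kappa$ cells is asserted but not argued; both are handled by per-cell dominated convergence with Potter bounds, as in Lemma \ref{lem:lemma2}.
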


\begin{remark}
Note that if $\alpha \in (-\frac{1}{2},0)$, then having
\begin{align*}
\mathbb{E}[|\sigma(s) - \sigma(0)|^2] = \mathcal{O}\big(s^\theta\big),  \quad s\downarrow 0,
\end{align*}
for all $\theta \in (0,1)$, ensures that \eqref{eq:sigmab1} holds. (Take, say, $\delta := \frac{1}{2}(1 - (2\alpha+1))>0$ and $\theta := 2\alpha +1 + \delta = \alpha + 1  \in (0,1)$.)
\end{remark}

When the hybrid scheme is used to simulate the $\mathcal{BSS}$ process $X$ on an equidistant grid $\{0,\frac{1}{n},\frac{2}{n},\ldots,\frac{\lfloor nT \rfloor}{n} \}$ for some $T>0$ (see Section \ref{ssec:impl} on the details of the implementation), the following consequence of Theorem \ref{th:mainTh} ensures that the covariance structure of the simulated process approximates that of the actual process $X$.

\begin{corollary}[Covariance structure]\label{cor:covariance}
Suppose that the assumptions of Theorem \ref{th:mainTh} hold. Then for any $s$,\, $t\in\R$ and $\varepsilon>0$,
\begin{equation*}
|\E[X_n(t) X_n(s)]-\E[X(t) X(s)]| = \mathcal{O}\big(n^{-(\alpha +\frac{1}{2})+\varepsilon}\big), \quad n \rightarrow \infty.
\end{equation*}
\end{corollary}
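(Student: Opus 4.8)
The plan is to reduce the covariance error to the mean square errors already controlled by Theorem \ref{th:mainTh}, using bilinearity of the covariance together with the Cauchy--Schwarz inequality. For any $s,t \in \R$ and $n \in \N$ I would first write
\begin{equation*}
\E[X_n(t) X_n(s)] - \E[X(t) X(s)] = \E\big[X_n(t)\big(X_n(s) - X(s)\big)\big] + \E\big[\big(X_n(t) - X(t)\big)X(s)\big],
\end{equation*}
and then apply Cauchy--Schwarz to each term, obtaining
\begin{equation*}
\big|\E[X_n(t) X_n(s)] - \E[X(t) X(s)]\big| \leq \sqrt{\E[X_n(t)^2]}\,\sqrt{\E[|X_n(s) - X(s)|^2]} + \sqrt{\E[|X_n(t) - X(t)|^2]}\,\sqrt{\E[X(s)^2]}.
\end{equation*}

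Next I would invoke Theorem \ref{th:mainTh}: under its hypotheses $\E[|X_n(t) - X(t)|^2] \sim J(\alpha,\kappa,\mathbf{b})\,\E[\sigma(0)^2]\, n^{-(2\alpha+1)} L_g(1/n)^2$, hence $\sqrt{\E[|X_n(t) - X(t)|^2]} = \mathcal{O}\big(n^{-(\alpha + 1/2)} L_g(1/n)\big)$, and likewise with $s$ in place of $t$. The factor $\sqrt{\E[X(s)^2]}$ is a finite constant, since covariance stationarity gives $\E[X(s)^2] = \E[\sigma(0)^2]\int_0^\infty g(x)^2 dx < \infty$. For the remaining factor $\sqrt{\E[X_n(t)^2]}$ I would note that $X_n(t) \to X(t)$ in $L^2$ by Theorem \ref{th:mainTh}, so $\E[X_n(t)^2] \to \E[X(t)^2] < \infty$ and in particular $\sqrt{\E[X_n(t)^2]} = \mathcal{O}(1)$ (alternatively, $\E[X_n(t)^2] \leq 2\E[X(t)^2] + 2\E[|X_n(t) - X(t)|^2]$, both terms bounded).

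Finally I would absorb the slowly varying factor using the Potter-type bound \eqref{eq:potterzero}, which applies to $L_g$ because $L_g$ is slowly varying at $0$, bounded away from $0$ by \eqref{ass:zero}, and bounded away from $\infty$ on every interval $(u,1]$ by continuity on the compact set $[u,1]$: taking $\delta := \varepsilon$ therein yields $L_g(1/n) \leq \overline{C}_\varepsilon\, n^{\varepsilon}$ for all $n \in \N$. Combining the three estimates gives $|\E[X_n(t) X_n(s)] - \E[X(t) X(s)]| = \mathcal{O}\big(n^{-(\alpha + 1/2) + \varepsilon}\big)$, as claimed. There is no serious obstacle in this argument; the only points needing a little care are the uniform-in-$n$ control of $\E[X_n(t)^2]$ (supplied by the $L^2$-convergence in Theorem \ref{th:mainTh}) and the bookkeeping of the slowly varying factor $L_g(1/n)$, which is precisely what forces the arbitrarily small loss $\varepsilon$ in the exponent.
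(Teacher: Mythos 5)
Your proposal is correct and follows essentially the same route as the paper: the identical bilinear decomposition with Cauchy--Schwarz, the bound $\sup_n \E[X_n(t)^2] < \infty$ via the $L^2$-convergence from Theorem \ref{th:mainTh}, and the Potter-type bound \eqref{eq:potterzero} to absorb $L_g(1/n)$ into $n^{\varepsilon}$. No gaps.
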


\begin{proof}
Let $s$,\, $t\in\R$. Applying the Cauchy--Schwarz inequality, we get
\begin{multline*}
|\E[X_n(t) X_n(s)]-\E[X(t) X(s)]| \\  \leq \E[X_n(t)^2]^{1/2} \E[|X(s)-X_n(s)|^2]^{1/2}  + \E[X(s)^2]^{1/2} \E[|X(t)-X_n(t)|^2]^{1/2}.
\end{multline*}
We have $\sup_{n \in \N} \E[X_n(t)^2]^{1/2} <\infty$ since $\E[X_n(t)^2] \rightarrow \E[X(t)^2]<\infty$ as $n \rightarrow \infty$, by Theorem \ref{th:mainTh}. Moreover, Theorem \ref{th:mainTh} and the bound \eqref{eq:potterzero} imply that $\E[|X(s)-X_n(s)|^2]^{1/2}=\mathcal{O}(n^{-(\alpha +\frac{1}{2})+\varepsilon})$ and $\E[|X(t)-X_n(t)|^2]^{1/2}=\mathcal{O}(n^{-(\alpha +\frac{1}{2})+\varepsilon})$ for any $\varepsilon>0$.
\end{proof}

In Theorem \ref{th:mainTh}, the asymptotics of the MSE \eqref{eq:rate1} are determined by the behavior of the kernel function $g$ near zero, as specified in \eqref{ass:zero}. The condition \eqref{eq:gammaasymp} ensures that error from approximating $g$ near zero is asymptotically larger than the error induced by the truncation of the stochastic integral \eqref{eq:X} at $t - \frac{N_n}{n}$. In fact, different kind of asymptotics of the MSE, where truncation error becomes dominant, could be derived when  \eqref{eq:gammaasymp} does not hold, under some additional assumptions, but we do not pursue this direction in the present paper. 

While the rate of convergence in \eqref{eq:rate1} is fully determined by the roughness index $\alpha$, which may seem discouraging at first, it turns out that the quantity $J(\alpha,\kappa,\mathbf{b})$, which we shall call the \emph{asymptotic} MSE, can vary a lot, depending on how we choose $\kappa$ and $\mathbf{b}$, and can have a substantial impact on the precision of the approximation of $X$. It is immediate from \eqref{eq:Jdef} that increasing $\kappa$ will decrease $J(\alpha,\kappa,\mathbf{b})$. Moreover, for given $\alpha$ and $\kappa$, it is straightforward to choose $\mathbf{b}$ so that $J(\alpha,\kappa,\mathbf{b})$ is minimized, as shown in the following result.

\begin{proposition}[Optimal discretization]\label{prop:optD}
Let $\alpha \in (-\frac{1}{2},\frac{1}{2})\setminus \{ 0\}$ and $\kappa \geq 0$. Among all sequences $\mathbf{b}=\{b_k\}_{k=\kappa+1}^{\infty}$ with $b_k \in [k-1,k]\setminus \{0 \}$ for $k \geq \kappa +1$, the function $J(\alpha,\kappa,\mathbf{b})$, and consequently the asymptotic MSE induced by the discretization, is minimized by the sequence $\mathbf{b}^*$ given by
\begin{align*}
b_k^* = \bigg( \frac{k^{\alpha+1} - (k-1)^{\alpha+1}}{\alpha+1}\bigg)^{1/\alpha}, \quad k\geq \kappa+1. 
\end{align*}
\end{proposition}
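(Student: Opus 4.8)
The plan is to exploit the fact that the series defining $J(\alpha,\kappa,\mathbf{b})$ decouples over the index $k$: the $k$-th summand $\int_{k-1}^{k}(y^{\alpha}-b_k^{\alpha})^2\,dy$ depends on $\mathbf{b}$ only through the single entry $b_k$, and the admissibility constraint $b_k\in[k-1,k]\setminus\{0\}$ is imposed separately for each $k$. Since every summand is nonnegative, it therefore suffices to minimize each summand individually: if $b_k^{*}$ minimizes $b\mapsto\int_{k-1}^{k}(y^{\alpha}-b^{\alpha})^2\,dy$ over the admissible set for index $k$, then for any admissible sequence $\mathbf{b}$ one has $\int_{k-1}^{k}(y^{\alpha}-b_k^{\alpha})^2\,dy\ge\int_{k-1}^{k}(y^{\alpha}-(b_k^{*})^{\alpha})^2\,dy$ for every $k$, and summing these nonnegative quantities gives $J(\alpha,\kappa,\mathbf{b})\ge J(\alpha,\kappa,\mathbf{b}^{*})$.

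First I would carry out the one-dimensional optimization. Fixing $k\ge\kappa+1$ and substituting $c:=b^{\alpha}$, the $k$-th summand becomes the quadratic
\[
q_k(c)=\int_{k-1}^{k}y^{2\alpha}\,dy-2c\int_{k-1}^{k}y^{\alpha}\,dy+c^{2},
\]
which is well-defined (both integrals converge since $2\alpha>-1$, also for $k=1$) and strictly convex in $c$, hence uniquely minimized over all of $\R$ at
\[
c_k^{*}=\int_{k-1}^{k}y^{\alpha}\,dy=\frac{k^{\alpha+1}-(k-1)^{\alpha+1}}{\alpha+1}.
\]
It then remains to check that this unconstrained minimizer is \emph{feasible}, i.e.\ that $c_k^{*}=b^{\alpha}$ for some $b$ in the prescribed interval. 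By the mean value theorem for integrals (or, for $k=1$ with $\kappa=0$, by the intermediate value theorem on the half-open cell, where $y\mapsto y^{\alpha}$ is continuous and strictly monotone), there is a point $\xi_k$ strictly between $k-1$ and $k$ — and in particular $\xi_1>0$ in the boundary case — with $\int_{k-1}^{k}y^{\alpha}\,dy=\xi_k^{\alpha}$. Since $y\mapsto y^{\alpha}$ is a bijection of $(0,\infty)$ onto itself, $b_k^{*}:=(c_k^{*})^{1/\alpha}=\xi_k\in[k-1,k]\setminus\{0\}$ is admissible, and it is exactly the expression in the statement. Thus the constrained and unconstrained minimizers of the $k$-th summand coincide; strict convexity of $q_k$ together with injectivity of $b\mapsto b^{\alpha}$ yields uniqueness, and finiteness of $J(\alpha,\kappa,\mathbf{b}^{*})$ is already recorded in \eqref{eq:Jdef}. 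Combining this with the decoupling argument completes the proof.

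The decoupling step is essentially bookkeeping and the one-dimensional minimization is a routine least-squares computation, so I expect the only genuinely delicate point to be the feasibility check: verifying that $(c_k^{*})^{1/\alpha}$ lands in $[k-1,k]$, and in particular that $b_1^{*}\ne0$ when $\kappa=0$, where the left endpoint of the first cell is $0$ and $y^{\alpha}$ is unbounded for $\alpha<0$. The mean value / intermediate value theorem dispatches this cleanly because the relevant integral of $y^\alpha$ is still finite and the integrand is continuous on the (possibly half-open) cell.
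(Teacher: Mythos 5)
Your proposal is correct and follows essentially the same route as the paper: decouple the series over $k$ and minimize each cell integral separately, obtaining the unconstrained optimum $c=\int_{k-1}^{k}y^{\alpha}\,dy$ (your direct quadratic minimization in $c=b^{\alpha}$ is just the explicit form of the paper's $L^2$-orthogonality condition $\int_{k-1}^{k}(y^{\alpha}-c)\,dy=0$) and then setting $b_k^{*}=c^{1/\alpha}$. Your explicit feasibility check via the mean value theorem, including the half-open cell when $\kappa=0$, is a point the paper merely asserts with $b_k^{*}\in(k-1,k)$, so it is a welcome extra detail rather than a different method.
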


\begin{proof}
Clearly, a sequence $\mathbf{b}=\{b_k\}_{k=\kappa+1}^{\infty}$ minimizes the function $J(\alpha,\kappa,\mathbf{b})$ if and only if $b_k$ minimizes $\int_{k-1}^k (y^\alpha-b_k^\alpha)^2 dy$ for any $k \geq \kappa+1$. By standard $L^2$-space theory, $c \in \R$ minimizes the integral $\int_{k-1}^k (y^\alpha -c)^2 dy$ if and only if the function $y \mapsto y^\alpha - c$ is orthogonal in $L^2$ to all constant functions. This is tantamount to
\begin{equation*}
\int_{k-1}^k (y^\alpha - c) dy = 0,
\end{equation*}
and computing the integral and solving for $c$ yields
\begin{equation*}
c = \frac{k^{\alpha+1}- (k-1)^{\alpha+1}}{\alpha + 1}.
\end{equation*}
Setting $b^{*}_k := c^{1/\alpha} \in (k-1,k)$ completes the proof.
\end{proof}

To understand how much increasing $\kappa$ and using the optimal sequence $\mathbf{b}^*$ from Proposition \ref{prop:optD} improves the approximation, we study numerically the asymptotic \emph{root mean square error} (RMSE) $\sqrt{J(\alpha,\kappa,\mathbf{b})}$. In particular, we assess how much the asymptotic RMSE decreases relative to RMSE of the forward Riemann-sum scheme ($\kappa=0$ and $\mathbf{b} = \mathbf{b}_{\mathrm{FWD}}$) using the quantity
\begin{equation}\label{eq:reduction}
\textrm{reduction in asymptotic RMSE} = - \frac{\sqrt{J(\alpha,\kappa,\mathbf{b})}-\sqrt{J(\alpha,0,\mathbf{b}_{\mathrm{FWD}})}}{\sqrt{J(\alpha,0,\mathbf{b}_{\mathrm{FWD}})}} \cdot 100\%.
\end{equation}

The results are presented in Figure \ref{fig:optimal}. We find that employing the hybrid scheme with $\kappa \geq 1$ leads to a substantial reduction in the asymptotic RMSE relative to the forward Riemann-sum scheme when $\alpha \in (-\frac{1}{2},0)$. Indeed, when $\kappa \geq 1$, the asymptotic RMSE, as a function of $\alpha$, does not blow up as $\alpha \rightarrow -\frac{1}{2}$, while with $\kappa = 0$ it does. This explains why the reduction in the asymptotic RMSE approaches $100\%$ as as $\alpha \rightarrow -\frac{1}{2}$.  When $\alpha \in (0,\frac{1}{2})$, the improvement achieved using the hybrid scheme is more modest, but still considerable. Figure \ref{fig:optimal} also highlights the importance of using the optimal sequence $\mathbf{b}^*$, instead of $\mathbf{b}_{\mathrm{FWD}}$, as evaluation points in the scheme, in particular when $\alpha \in (0,\frac{1}{2})$. Finally, we observe that increasing $\kappa$ beyond $2$ does not appear to lead to a significant further reduction. Indeed, in our numerical experiments, reported in Section \ref{sec:estimate_a} and \ref{sec:roughvol} below, we observe that using $\kappa = 1,2$ already leads to good results.

\begin{figure}[!t] 
\centering
\begin{tabular}{rl} 
\includegraphics[scale=0.95,trim=0.5cm 1cm 0.7cm 1cm, clip=TRUE]{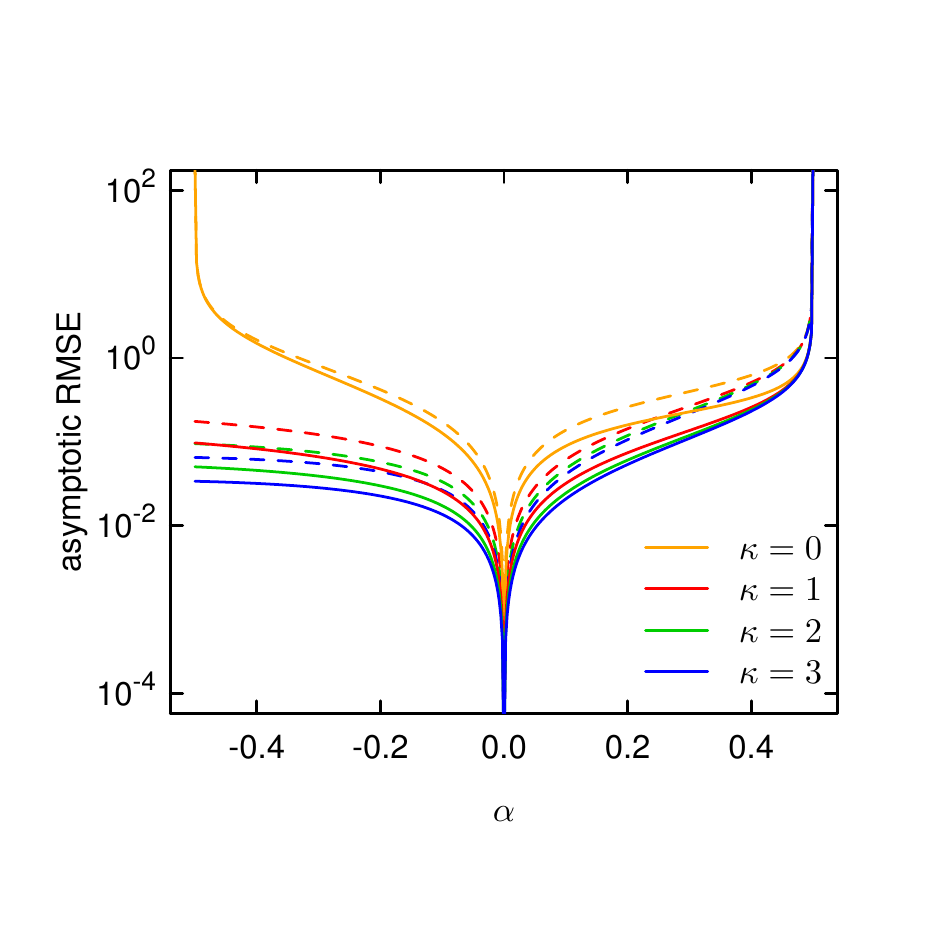} & \includegraphics[scale=0.95,trim=0.5cm 1cm 0.7cm 1cm, clip=TRUE]{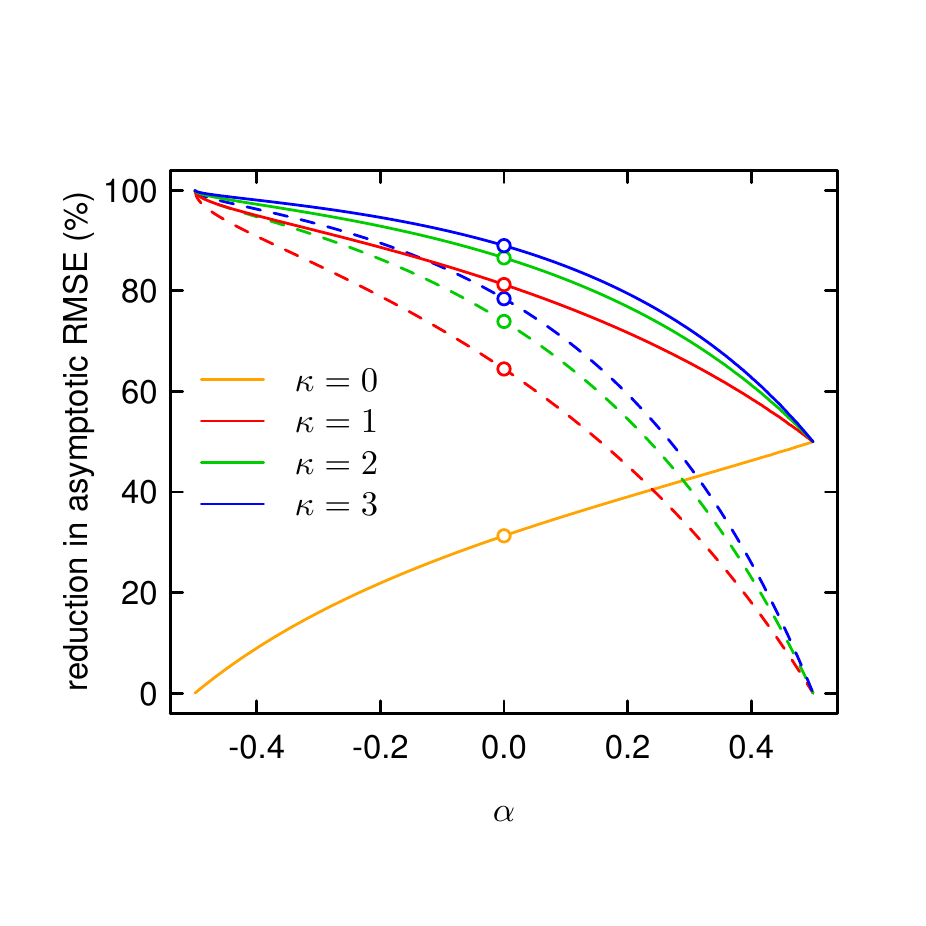}
\end{tabular}
\caption{\it Left: The asymptotic RMSE given by $\sqrt{J(\alpha,\kappa,\mathbf{b})}$ as a function of $\alpha \in (-\frac{1}{2},\frac{1}{2})\setminus \{0\}$ for $\kappa = 0,1,2,3$ using $\mathbf{b} = \mathbf{b}^*$ of Proposition \ref{prop:optD} (solid lines) and $\mathbf{b}=\mathbf{b}_{\mathrm{FWD}}$ (dashed lines).
Right: Reduction in the asymptotic RMSE relative to the forward Riemann-sum scheme ($\kappa=0$ and $\mathbf{b}=\mathbf{b}_{\mathrm{FWD}}$) given by the formula \eqref{eq:reduction}, plotted as a function of $\alpha \in (-\frac{1}{2},\frac{1}{2})\setminus \{0\}$ for $\kappa = 0,1,2,3$ using $\mathbf{b} = \mathbf{b}^*$  (solid lines) and for $\kappa = 1,2,3$ using $\mathbf{b}=\mathbf{b}_{\mathrm{FWD}}$ (dashed lines).
 In all computations, we have used the approximations outlined in Remark \ref{rem:Japprox} with $N=1\,000\,000$.}
\label{fig:optimal}
\end{figure}

\begin{remark}\label{rem:Japprox}
It is non-trivial to evaluate the quantity $J(\alpha,\kappa,\mathbf{b})$ numerically. Computing the integral in \eqref{eq:Jdef} explicitly, we can approximate $J(\alpha,\kappa,\mathbf{b})$ by
\begin{equation*}
J_N(\alpha,\kappa,\mathbf{b}) := \sum_{k=\kappa+1}^N \bigg( \frac{k^{2\alpha+1}-(k-1)^{2\alpha+1}}{2\alpha +1}-\frac{2 b^\alpha_k\big(k^{\alpha+1}-(k-1)^{\alpha+1}\big)}{\alpha +1} + b^{2\alpha}_k \bigg)
\end{equation*}
with some large $N\in \N$. This approximation is adequate when $\alpha \in (-\frac{1}{2},0)$, but its accuracy deteriorates when $\alpha \rightarrow \frac{1}{2}$. In particular, the singularity of the function $\alpha \mapsto J(\alpha,\kappa,\mathbf{b})$ at $\frac{1}{2}$ is difficult to capture using $J_N(\alpha,\kappa,\mathbf{b})$ with numerically feasible values of $N$. To overcome this numerical problem, we introduce a correction term in the case $\alpha \in (0,\frac{1}{2})$. The correction term can be derived informally as follows. By the mean value theorem, and since $b^*_k \approx k-\frac{1}{2}$ for large $k$, we have
\begin{equation*}
(y^\alpha - b^\alpha_k)^2 = \alpha^2 \xi^{2\alpha -2 }(y - b_k)^2 \approx \begin{cases}
\alpha^2 k^{2\alpha -2 }(y - k)^2, & \mathbf{b} = \mathbf{b}_{\mathrm{FWD}},\\
\alpha^2 k^{2\alpha -2 }(y - k + \frac{1}{2})^2, & \mathbf{b} = \mathbf{b}^*,
\end{cases}
\end{equation*}
where $\xi = \xi(y,b_k) \in [k-1,k]$, for large $k$. Thus, for large $N$, we obtain
\begin{equation*}
\begin{split}
J(\alpha,\kappa,\mathbf{b}) - J_N(\alpha,\kappa,\mathbf{b}) &= \sum_{k=N+1}^\infty \int_{k-1}^k (y^{\alpha} - b_k^{\alpha})^2dy \\
& \approx \begin{cases}
\alpha^2 \sum_{k=N+1}^\infty k^{2\alpha -2 } \int_{k-1}^k (y - k)^2 dy, & \mathbf{b} = \mathbf{b}_{\mathrm{FWD}},\\
\alpha^2  \sum_{k=N+1}^\infty k^{2\alpha -2 }\int_{k-1}^k (y - k + \frac{1}{2})^2dy, & \mathbf{b} = \mathbf{b}^*,
\end{cases} \\
& = \begin{cases}
\frac{\alpha^2}{3} \zeta(2-2\alpha,N+1), & \mathbf{b} = \mathbf{b}_{\mathrm{FWD}},\\
\frac{\alpha^2}{12} \zeta(2-2\alpha,N+1), & \mathbf{b} = \mathbf{b}^*,
\end{cases}
\end{split}
\end{equation*}
where $\zeta(x,s) := \sum_{k=0}^\infty \frac{1}{(k+s)^x}$, $x>1$, $s > 0$, is the \emph{Hurwitz zeta function}, which can be evaluated using accurate numerical algorithms.
\end{remark} 

\begin{remark}
Unlike the Fourier-based method of Benth et al.\ \cite{heidar13fourier}, the hybrid scheme does not require truncating the singularity of the kernel function $g$ when $\alpha \in (-\frac{1}{2},0)$, which is beneficial to maintaining the accuracy of the scheme when $\alpha$ is near $-\frac{1}{2}$.
Let us briefly analyze the effect of truncating the singularity of $g$ on the approximation error, cf.\ \cite[pp.\ 75--76]{heidar13fourier}. 
Consider, for any $\varepsilon>0$, the modified $\mathcal{BSS}$ process
\begin{equation*}
\tilde{X}_\varepsilon(t) := \int_{-\infty}^t g_\varepsilon(t-s) \sigma(s) dW(s), \quad t \in \R,
\end{equation*}
defined using the truncated kernel function
\begin{equation*}
g_\varepsilon(x) := \begin{cases}
g(\varepsilon), & x \in (0,\varepsilon],\\
g(x), & x \in (\varepsilon,\infty).
\end{cases}
\end{equation*}
Adapting the proof of Theorem \ref{th:mainTh} in a straightforward manner, it is possible to show that, under \eqref{ass:zero} and \eqref{ass:infb},
\begin{multline*}
\mathbb{E}\big[\big|X(t)-\tilde{X}_\varepsilon(t)\big|^2\big]  = \E[\sigma(0)^2] \int_{0}^\varepsilon \big(g(s) - g(\varepsilon)\big)^2 ds
  \\ \sim  \underbrace{\bigg( \frac{1}{2\alpha +1}-\frac{2}{\alpha+1}+1\bigg)}_{=:\tilde{J}(\alpha)} \E[\sigma(0)^2] \varepsilon^{2\alpha+1} L_g(\varepsilon)^2, \quad \varepsilon \downarrow 0,
\end{multline*}
for any $t \in \R$. While the rate of convergence, as $\varepsilon \downarrow 0$, of the MSE that arises from replacing $g$ with $g_\varepsilon$ is analogous to the rate of convergence of the hybrid scheme, it is important to note that the factor $\tilde{J}(\alpha)$ blows up as $\alpha \downarrow -\frac{1}{2}$. In fact, $\tilde{J}(\alpha)$ is equal to the first term in the series that defines $J(\alpha,0,\mathbf{b}_{\mathrm{FWD}})$ and
\begin{equation*}
\tilde{J}(\alpha) \sim J(\alpha,0,\mathbf{b}_{\mathrm{FWD}}), \quad \alpha \downarrow -\frac{1}{2}, 
\end{equation*}
which indicates that the effect of truncating the singularity, in terms of MSE, is similar to the effect of using the forward Riemann-sum scheme to discretize the process when $\alpha$ is near $-\frac{1}{2}$. In particular, the truncation threshold $\varepsilon$ would then have to be very small in order to keep the truncation error in check.
\end{remark}

\subsection{Extension to truncated Brownian semistationary processes}\label{sec:tbss}

It is useful to extend the hybrid scheme to a class of non-stationary processes that are closely related to $\mathcal{BSS}$ processes. This extension is important in connection with an application to the so-called rough Bergomi model, which we discuss in Section \ref{sec:roughvol}, below. More precisely, we consider processes of the form
\begin{equation}\label{eq:Y}
Y(t) = \int_0^t g(t-s) \sigma(s) dW(s), \quad t \geq 0,
\end{equation}
where the kernel function $g$, volatility process $\sigma$ and driving Brownian motion $W$ are as before. We call $Y$ a \emph{truncated Brownian semistationary} ($\mathcal{TBSS}$) process, as $Y$ is obtained from the $\mathcal{BSS}$ process $X$ by truncating the stochastic integral in $\eqref{eq:X}$ at $0$.
Of the preceding assumptions, only \eqref{ass:zero} and \eqref{ass:deriv} are needed to ensure that the stochastic integral in \eqref{eq:Y} exists --- in fact, of \eqref{ass:deriv}, only the requirement that $g$ is differentiable on $(0,\infty)$ comes into play.

The $\mathcal{TBSS}$ process $Y$ does not have covariance stationary increments, so we define its (time-dependent) variogram as
\begin{equation*}
V_Y(h,t) := \E[|Y(t+h)-Y(t)|^2], \quad h,t \geq 0.
\end{equation*}
Extending Proposition \ref{prop:localbehavior}, we can describe the behavior of $h \mapsto V_Y(h,t)$ near zero as follows. The existence of a locally H\"older continuous modification is then a straightforward consequence.
We omit the proof of this result, as it would be straightforward adaptation of the proof of Proposition \ref{prop:localbehavior}.

\begin{proposition}[Local behavior and continuity]%\label{prop:localbehaviorY}
Suppose that \eqref{ass:zero} and \eqref{ass:deriv} hold.
\begin{enumerate}[label=(\roman*),ref=\roman*,leftmargin=*]
\item The variogram of $Y$ satisfies for any $t \geq 0$,
\begin{equation*}
V_Y(h,t) \sim \E[\sigma(0)^2] \bigg(\frac{1}{2\alpha + 1} + \mathbf{1}_{(0,\infty)}(t)\int_0^\infty \big( (y+1)^\alpha - y^\alpha\big)^2 dy\bigg) h^{2\alpha+1} L_g(h)^2, \quad h \rightarrow 0,
\end{equation*}
which implies that $h \mapsto V_Y(h,t)$ is regularly varying at zero with index $2\alpha+1$.
\item The process $Y$ has a modification with locally $\phi$-H\"older continuous trajectories for any $\phi \in (0,\alpha + \frac{1}{2})$.
\end{enumerate}
\end{proposition}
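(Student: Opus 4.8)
The plan is to mimic the proof of Proposition~\ref{prop:localbehavior}, exploiting the fact that the only difference between $Y$ and the $\mathcal{BSS}$ process $X$ is the lower truncation of the stochastic integral at $0$. Fix $h,t\geq 0$. Splitting the integral defining $Y(t+h)$ at $t$ and subtracting $Y(t)$ gives
\[
Y(t+h)-Y(t)=\int_t^{t+h} g(t+h-s)\sigma(s)\,dW(s)+\int_0^t\big(g(t+h-s)-g(t-s)\big)\sigma(s)\,dW(s),
\]
and the two stochastic integrals are over disjoint time intervals, hence uncorrelated. Taking the $L^2$-norm, using the conditional It\^o isometry together with the covariance stationarity of $\sigma$ (so that $\E[\sigma(s)^2]=\E[\sigma(0)^2]$), and changing variables $u=t+h-s$ in the first integral and $u=t-s$ in the second yields
\[
V_Y(h,t)=\E\bigg[\int_0^h g(u)^2\sigma(t+h-u)^2\,du\bigg]+\E\bigg[\int_0^t\big(g(u+h)-g(u)\big)^2\sigma(t-u)^2\,du\bigg].
\]
This is exactly the expression obtained for $V_X(h)$ in the proof of Proposition~\ref{prop:localbehavior}, except that the second integral runs over the finite interval $(0,t)$ rather than over $(0,\infty)$.

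For part~(i) I would then analyze the two terms as $h\downarrow 0$ exactly as in Proposition~\ref{prop:localbehavior}. In the first term, $g(u)^2=u^{2\alpha}L_g(u)^2$ near $0$ by \eqref{ass:zero}, so Karamata's theorem gives $\int_0^h u^{2\alpha}L_g(u)^2\,du\sim \frac{1}{2\alpha+1}h^{2\alpha+1}L_g(h)^2$, while the factor $\sigma(t+h-u)^2$ contributes $\E[\sigma(0)^2]$ in the limit by continuity in $L^2$ and dominated convergence. In the second term, substituting $u=hy$ produces $h^{2\alpha+1}\int_0^{t/h}\big[(y+1)^\alpha L_g(h(y+1))-y^\alpha L_g(hy)\big]^2\sigma(t-hy)^2\,dy$; when $t>0$ the upper limit tends to $\infty$, and, dominating $\big[(y+1)^\alpha - y^\alpha\big]^2 L_g(\cdot)^2/L_g(h)^2$ by means of the Potter bounds \eqref{eq:potterzero} (the limiting integrand $((y+1)^\alpha-y^\alpha)^2$ being integrable at $\infty$ since it decays like $y^{2\alpha-2}$ with $2\alpha-2<-1$, and at $0$ since $2\alpha>-1$), dominated convergence gives the limit $\E[\sigma(0)^2]\int_0^\infty((y+1)^\alpha-y^\alpha)^2\,dy\;h^{2\alpha+1}L_g(h)^2$. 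When $t=0$, however, $Y(0)=0$ and the second integral is empty, so that term is simply absent --- this is precisely the source of the factor $\mathbf{1}_{(0,\infty)}(t)$ in the statement. Adding the two contributions gives the claimed asymptotics, and regular variation of $h\mapsto V_Y(h,t)$ with index $2\alpha+1$ follows since $L_g^2$ is slowly varying.

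For part~(ii) I would combine the variogram estimate with the Potter bounds \eqref{eq:potterzero} to obtain, for $t$ ranging over a compact set, $V_Y(h,t)=\mathcal{O}(h^{2\alpha+1-\varepsilon})$ as $h\downarrow 0$ for every $\varepsilon>0$, uniformly in $t$. Upgrading this $L^2$-bound on increments to the higher-moment bound $\E[|Y(t+h)-Y(t)|^{2p}]=\mathcal{O}(h^{p(2\alpha+1)-\varepsilon})$ required by the Kolmogorov--Chentsov continuity theorem proceeds exactly as in Proposition~\ref{prop:localbehavior}: apply the Burkholder--Davis--Gundy inequality to each of the two stochastic integrals in the decomposition above and control the resulting $ds$-integrals using the local boundedness of the trajectories of $\sigma$, localizing if necessary by the stopping times $\tau_m:=\inf\{s:|\sigma(s)|>m\}$. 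Choosing $p$ so large that $p(\alpha+\tfrac12)>1$ then yields a modification of $Y$ with locally $\phi$-H\"older continuous trajectories for every $\phi\in(0,\alpha+\tfrac12)$. I expect the only genuinely delicate point to be this last passage from a second-moment to a higher-moment estimate, given that $\sigma$ is assumed merely to have locally bounded trajectories and finite second moments; the localization argument handles it, precisely as in the proof of Proposition~\ref{prop:localbehavior}.
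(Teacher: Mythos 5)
Your route is exactly the adaptation of Proposition \ref{prop:localbehavior} that the paper has in mind (it omits the proof for that reason): split the increment at time $t$, use the It\=o isometry and covariance stationarity of $\sigma$ to reduce the variogram to $\E[\sigma(0)^2]\big(\int_0^h g(u)^2\,du+\int_0^t(g(u+h)-g(u))^2\,du\big)$, treat the first piece by Karamata (Lemma \ref{lem:karamatazero}) and the second by dominated convergence after $u=hy$, and get (ii) from moment bounds for the localized process plus Kolmogorov--Chentsov. Part (ii) as you outline it is sound, and indeed simpler than in Proposition \ref{prop:localbehavior}, since the compact integration range makes the auxiliary process $F$ used there unnecessary; also, a small point, the factor $\E[\sigma(0)^2]$ in the first term comes exactly from Tonelli and covariance stationarity, so no $L^2$-continuity of $\sigma$ (which is not assumed) is needed.

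However, two steps of part (i) as you describe them would not go through. First, the domination: after substituting $u=hy$ the integrand is $\big[(y+1)^\alpha\frac{L_g(h(y+1))}{L_g(h)}-y^\alpha\frac{L_g(hy)}{L_g(h)}\big]^2$, and the Potter bound \eqref{eq:potterbounds} alone does not give an integrable dominant on $[1,\infty)$: it controls each term separately by a constant times $y^{2(\alpha+\delta)}$, which is not integrable at infinity, and provides no cancellation for the difference; integrability of the limiting integrand $((y+1)^\alpha-y^\alpha)^2$ is not what dominated convergence requires. You need the step from the proof of Proposition \ref{prop:localbehavior}: add and subtract $(y+1)^\alpha L_g(hy)/L_g(h)$, bound $|L_g(h(y+1))-L_g(hy)|$ via the mean value theorem and the hypothesis $|L_g'(x)|\le C(1+x^{-1})$ in \eqref{ass:zero} together with $\inf_{(0,1]}L_g>0$, and handle the remaining term with \eqref{eq:powerbounds} and Potter. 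Second, the representation $g(x)=x^\alpha L_g(x)$ in \eqref{ass:zero} is valid only for $x\in(0,1]$, so for $t\ge 1$ you cannot rewrite the whole integral $\int_0^t(g(u+h)-g(u))^2\,du$ in terms of $L_g$ as you do over $(0,t/h)$. You must split at $u=\min\{t,1\}-h$ and show the remaining piece is $\mathcal{O}(h^2)$ by the mean value theorem and \eqref{ass:deriv} (continuity of $g'$ on $(0,\infty)$ suffices on a compact interval), which is negligible against $h^{2\alpha+1}L_g(h)^2$ since $2\alpha+1<2$; this is precisely where \eqref{ass:deriv} enters part (i), and the fact that your sketch never invokes it is the symptom of the omission.
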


Note that while the increments of $Y$ are not covariance stationary, the asymptotic behavior of $V_Y(h,t)$ is the same as that of $V_X(h)$ as $h \rightarrow 0$ (cf.\ Proposition \ref{prop:localbehavior}) for any $t>0$. Thus, the increments of $Y$ (apart from increments starting at time $0$) are locally like the increments of $X$.

We define the hybrid scheme to discretize $Y(t)$, for any $t \geq 0$, as
\begin{equation}\label{eq:Yhybrid}
Y_n(t) := \check{Y}_n(t) + \hat{Y}_n(t),
\end{equation}
where
\begin{align*}
\check{Y}_n(t) & := \sum_{k=1}^{\min\{\lfloor nt \rfloor, \kappa \}} L_g\bigg( \frac{k}{n} \bigg) \sigma\bigg( t-\frac{k}{n}\bigg) \int_{t-\frac{k}{n}}^{t-\frac{k}{n}+\frac{1}{n}} (t-s)^\alpha dW(s), \\
\hat{Y}_n(t) & := \sum_{k=\kappa+1}^{\lfloor nt \rfloor} g\bigg(\frac{b_k}{n}\bigg)\sigma\bigg(t- \frac{k}{n}\bigg)\Bigg( W\bigg(t-\frac{k}{n}+\frac{1}{n}\bigg) - W\bigg(t-\frac{k}{n}\bigg)\Bigg).
\end{align*}
In effect, we simply drop the summands in \eqref{eq:checkX} and \eqref{eq:hatX} that correspond to integrals and increments on the negative real line. We make remarks on the implementation of this scheme in Section \ref{ssec:impl}, below.

The MSE of hybrid scheme for the $\mathcal{TBSS}$ process $Y$ has the following asymptotic behavior as $n \rightarrow \infty$, which is, in fact, identical to the asymptotic behavior of the MSE of the hybrid scheme for $\mathcal{BSS}$ processes. We omit the proof of this result, which would be a simple modification of the proof of Theorem \ref{th:mainTh}.

\begin{theorem}[Asymptotics of mean square error]\label{th:mainTh2}
Suppose that \eqref{ass:zero} and \eqref{ass:deriv} hold, and that for some $\delta>0$,
\begin{align*}
\mathbb{E}[|\sigma(s) - \sigma(0)|^2] = \mathcal{O}\big(s^{2\alpha +1+\delta}\big),  \quad s\downarrow 0.
\end{align*}
Then for all $t>0$,
\begin{align*}\label{eq:rate1}
\mathbb{E}[|Y(t) - Y_n(t)|^2] \sim J(\alpha,\kappa,\mathbf{b}) \mathbb{E}[\sigma(0)^2] n^{-(2\alpha +1)} L_g(1/n)^2 , \quad n\rightarrow \infty,
\end{align*}
where $J(\alpha,\kappa,\mathbf{b})$ is as in Theorem \ref{th:mainTh}
\end{theorem}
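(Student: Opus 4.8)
The plan is to mimic the proof of Theorem~\ref{th:mainTh} almost verbatim, the point being that truncating the stochastic integral at $0$ rather than at $-\infty$ only \emph{simplifies} the argument: no behavior of $g$ at infinity is ever used, which is why \eqref{ass:infb} and \eqref{ass:truncation} do not appear among the hypotheses. Fix $t>0$ and take $n$ large enough that $\lfloor nt\rfloor>\kappa$, so that $\check{Y}_n(t)$ runs over $k=1,\dots,\kappa$ exactly as $\check{X}_n(t)$ does. Writing $I_k:=\int_{t-k/n}^{t-(k-1)/n}g(t-s)\sigma(s)\,dW(s)$ for the $k$-th cell integral, I would decompose
\begin{equation*}
Y(t)-Y_n(t)=D_n+A_n+B_n,
\end{equation*}
where $D_n:=\int_0^{t-\lfloor nt\rfloor/n}g(t-s)\sigma(s)\,dW(s)$ is the integral over the leftover interval near $s=0$, $A_n:=\sum_{k=1}^{\kappa}\big(I_k-L_g(k/n)\sigma(t-\tfrac{k}{n})\int_{t-k/n}^{t-(k-1)/n}(t-s)^{\alpha}\,dW(s)\big)$ is the error of the power-function approximation, and $B_n:=\sum_{k=\kappa+1}^{\lfloor nt\rfloor}\big(I_k-g(b_k/n)\sigma(t-\tfrac{k}{n})(W(t-\tfrac{k-1}{n})-W(t-\tfrac{k}{n}))\big)$ is the error of the step-function approximation. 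As $D_n$, $A_n$, $B_n$ and the individual summands inside $A_n$ and $B_n$ are It\^o integrals over pairwise disjoint intervals, they are orthogonal in $L^2$; hence $\E[|Y(t)-Y_n(t)|^2]=\E[D_n^2]+\E[A_n^2]+\E[B_n^2]$, and the last two expand as sums of cellwise mean square errors.

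The term $\E[D_n^2]$ is the only genuinely new ingredient, and it is harmless. By covariance stationarity of $\sigma$ we have $\E[D_n^2]=\E[\sigma(0)^2]\int_{\lfloor nt\rfloor/n}^{t}g(u)^2\,du$, and since $g$ is continuous, hence bounded, on a neighborhood of $t>0$ while the interval of integration has length below $1/n$, this is $\mathcal{O}(n^{-1})$; it is negligible compared with the leading term $n^{-(2\alpha+1)}L_g(1/n)^2$ when $\alpha\in(-\tfrac{1}{2},0)$, and $D_n$ vanishes identically whenever $nt\in\N$, which is the case when $Y$ is simulated on an equidistant grid. The term $\E[A_n^2]$ is carried over verbatim from Theorem~\ref{th:mainTh}: writing $g(t-s)=(t-s)^{\alpha}L_g(t-s)$ on $(0,1]$, the $k$-th summand of $A_n$ splits into a part controlled by $|L_g(t-s)-L_g(k/n)|$, for which one uses the bound $|L_g'(x)|\le C(1+x^{-1})$ from \eqref{ass:zero} and the Potter bounds \eqref{eq:potterzero}, and a part controlled by $\E[|\sigma(s)-\sigma(t-\tfrac{k}{n})|^2]$, for which one uses covariance stationarity and \eqref{eq:sigmab1}; both are $o(n^{-(2\alpha+1)}L_g(1/n)^2)$, and since there are only $\kappa$ cells, so is $\E[A_n^2]$. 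Only \eqref{ass:zero} enters here.

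The main term is $\E[B_n^2]$. Splitting $g(t-s)\sigma(s)-g(b_k/n)\sigma(t-\tfrac{k}{n})=(g(t-s)-g(b_k/n))\sigma(s)+g(b_k/n)(\sigma(s)-\sigma(t-\tfrac{k}{n}))$, orthogonality and covariance stationarity give $\E[B_n^2]=\E[\sigma(0)^2]\sum_{k=\kappa+1}^{\lfloor nt\rfloor}\int_{t-k/n}^{t-(k-1)/n}(g(t-s)-g(b_k/n))^2\,ds+R_n$, where the remainder $R_n$ is controlled by \eqref{eq:sigmab1} exactly as in Theorem~\ref{th:mainTh} and is $o(n^{-(2\alpha+1)}L_g(1/n)^2)$; when $t>1$, the cells with $k/n\in(1,t]$ contribute at most $\mathcal{O}(n^{-2})$ to the double sum and $\mathcal{O}(n^{-(2\alpha+1+\delta)})$ to $R_n$, since $g$ and $g'$ are bounded on $[1,t]$ by \eqref{ass:deriv}, so they too are negligible. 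Substituting $u=t-s$ and then $u=v/n$ turns the leading double sum into $\E[\sigma(0)^2]\,n^{-1}\sum_{k=\kappa+1}^{\lfloor nt\rfloor}\int_{k-1}^{k}(g(v/n)-g(b_k/n))^2\,dv$; for $k\le n$ one has $g(v/n)-g(b_k/n)=n^{-\alpha}(v^{\alpha}L_g(v/n)-b_k^{\alpha}L_g(b_k/n))$, so after dividing by $n^{-(2\alpha+1)}L_g(1/n)^2$ the $k$-th term tends, by slow variation of $L_g$ (uniformly for $v$ in compact sets), to $\int_{k-1}^{k}(v^{\alpha}-b_k^{\alpha})^2\,dv$, and the whole sum tends to $J(\alpha,\kappa,\mathbf{b})$.

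The one step requiring genuine care --- and essentially the only non-routine point --- is justifying the interchange of limit and summation just used, that is, producing a summable, $n$-uniform majorant for the rescaled cell contributions. As in Theorem~\ref{th:mainTh}, \eqref{ass:zero} together with the Potter bounds yields $|g'(x)|\le C x^{\alpha-1-\varepsilon}$ on $(0,1]$ for small $\varepsilon>0$, whence the mean value theorem gives $\int_{k-1}^{k}(g(v/n)-g(b_k/n))^2\,dv\le C'\,n^{-(2\alpha+1+2\varepsilon)}(k-1)^{2\alpha-2-2\varepsilon}$ for $k\ge 2$; dividing by $n^{-(2\alpha+1)}L_g(1/n)^2$ leaves $n^{-2\varepsilon}/L_g(1/n)^2$ (bounded, again by the Potter bounds) times $(k-1)^{2\alpha-2-2\varepsilon}$ (summable since $\alpha<\tfrac{1}{2}$), so dominated convergence applies; the first cell $k=\kappa+1$, with $b_{\kappa+1}\in(0,1]$ when $\kappa=0$ so that the power singularity sits at the cell's edge, is handled by a separate direct computation. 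Everything else is a transcription of the corresponding parts of the proof of Theorem~\ref{th:mainTh} with \eqref{ass:infb} and \eqref{ass:truncation} removed, and combining the three pieces yields $\E[|Y(t)-Y_n(t)|^2]\sim J(\alpha,\kappa,\mathbf{b})\,\E[\sigma(0)^2]\,n^{-(2\alpha+1)}L_g(1/n)^2$ as $n\to\infty$.
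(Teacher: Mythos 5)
Your overall plan is the right one --- it is exactly the ``simple modification'' of the proof of Theorem \ref{th:mainTh} that the paper alludes to --- but the execution has one step that fails as written, and it is precisely the step you single out as the only non-routine point. From \eqref{ass:zero} and the Potter bound \eqref{eq:potterbounds} you do get $|g'(x)|\le C x^{\alpha-1-\varepsilon}$ on $(0,1]$, but the mean value theorem then yields $\int_{k-1}^{k}(g(v/n)-g(b_k/n))^2\,dv\le C'(k-1)^{2\alpha-2-2\varepsilon}\,n^{-2\alpha+2\varepsilon}$, not $C'(k-1)^{2\alpha-2-2\varepsilon}\,n^{-(2\alpha+1+2\varepsilon)}$: your stated bound is too small by a factor of order $n^{1+4\varepsilon}$. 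With the corrected exponent, after inserting the prefactor $n^{-1}$ and dividing by $n^{-(2\alpha+1)}L_g(1/n)^2$, the $k$-th rescaled term is only bounded by $(k-1)^{2\alpha-2-2\varepsilon}\,n^{2\varepsilon}/L_g(1/n)^2$, and $n^{2\varepsilon}/L_g(1/n)^2\to\infty$ because a slowly varying $L_g$ cannot grow like a positive power (try $L_g(x)=1-\log x$); so this is not an $n$-uniform majorant and dominated convergence does not apply as you describe. The repair is the paper's own treatment of $D'_n$ in the proof of Theorem \ref{th:mainTh}: normalize by $L_g(1/n)$, add and subtract $b_k^{\alpha}L_g(y/n)/L_g(1/n)$ inside $A_{n,k}$, bound the first piece using the Potter bound applied to the \emph{ratio} $L_g(y/n)/L_g(1/n)$ (so the $n$'s cancel) together with \eqref{eq:powerbounds}, and the second piece using the mean value theorem applied to $L_g$ with $|L_g'(x)|\le C(1+x^{-1})$; this gives the summable, $n$-free majorant $A_k$. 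Since the cells $\kappa+1\le k\le\min\{n,\lfloor nt\rfloor\}$ here are literally the same objects as in that proof, you can simply cite that argument (and Lemma \ref{lem:lemma2}) instead of re-deriving a majorant.

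Second, your handling of the boundary term $D_n$ does not deliver the theorem as you finally state it. You correctly compute $\E[D_n^2]=\E[\sigma(0)^2]\int_{\lfloor nt\rfloor/n}^{t}g(u)^2du=\mathcal{O}(n^{-1})$, and this is $o\big(n^{-(2\alpha+1)}L_g(1/n)^2\big)$ only when $\alpha\in(-\frac12,0)$. For $\alpha\in(0,\frac12)$ and $g(t)>0$ this term is of exact order $\{nt\}/n$ (with $\{nt\}$ the fractional part of $nt$), which dominates $n^{-(2\alpha+1)}$ along every subsequence on which $\{nt\}$ stays away from zero; so the asserted equivalence can only be proved for such $\alpha$ when $nt\in\N$ (grid times, as in the implementation and in the paper's Remark following \eqref{eq:Yhybrid}) or, more generally, when $\{nt\}=o(n^{-2\alpha})$. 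Your hedge in the middle of the argument acknowledges exactly this, but the closing sentence ``combining the three pieces yields\dots'' silently drops it; as a proof of the statement ``for all $t>0$'' your argument is complete only for $\alpha\in(-\frac12,0)$ (the regime relevant to the paper's applications) or for $t$ on the simulation grid, and that restriction must be made explicit. A minor further point: the two pieces of your within-cell split of $B_n$ are supported on the same cell and hence are not orthogonal; the cross term has to be absorbed via Cauchy--Schwarz, or one avoids it altogether by the Minkowski sandwich around \eqref{eq:minkowski} used in the paper (freeze $\sigma$ first via an auxiliary process $Y'_n$), after which your estimate of the remainder through \eqref{eq:sigmab1} goes through unchanged.
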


\begin{remark}
Under the assumptions of Theorem \ref{th:mainTh2}, the conclusion of Corollary \ref{cor:covariance} holds mutatis mutandis. In particular, the covariance structure of the discretized $\mathcal{TBSS}$ process approaches that of $Y$ when $n \rightarrow \infty$.
\end{remark}

\section{Implementation and numerical experiments}\label{sec:app}

\subsection{Practical implementation}\label{ssec:impl}

Simulating the $\mathcal{BSS}$ process $X$ on the equidistant grid $\{0,\frac{1}{n},\frac{2}{n},\ldots,\frac{\lfloor nT \rfloor}{n} \}$ for some $T>0$ using the hybrid scheme entails generating
\begin{equation}\label{eq:hybridprac}
X_n\bigg(\frac{i}{n} \bigg), \quad i = 0,1,\ldots,\lfloor nT \rfloor.
\end{equation}
Provided that we can simulate the random variables
\begin{align}
W^n_{i,j} & := \int_{\frac{i}{n}}^{\frac{i+1}{n}} \bigg(\frac{i+j}{n}-s\bigg)^\alpha dW(s), & i & = -N_n, -N_n+1,\ldots,\lfloor nT \rfloor-1, \quad j = 1,\ldots,\kappa,\label{eq:hybridinputW1} \\
W^n_i & := \int_{\frac{i}{n}}^{\frac{i+1}{n}} dW(s), & i & = -N_n, -N_n+1,\ldots,\lfloor nT \rfloor-1,\label{eq:hybridinputW2}\\
\sigma^n_i & := \sigma\bigg( \frac{i}{n} \bigg), & i & = -N_n, -N_n+1,\ldots,\lfloor nT \rfloor-1, \nonumber
\end{align}
we can compute \eqref{eq:hybridprac} via the formula
\begin{equation}\label{eq:hybridkey}
X_n\bigg(\frac{i}{n}\bigg) = \underbrace{\sum_{k=1}^\kappa L_g\bigg( \frac{k}{n}\bigg) \sigma^n_{i-k} W^n_{i-k,k}}_{=\check{X}_n(\frac{i}{n})} + \underbrace{\sum_{k=\kappa+1}^{N_n} g\bigg( \frac{b^*_k}{n}\bigg) \sigma^n_{i-k} W^n_{i-k}}_{=\hat{X}_n(\frac{i}{n})}.
\end{equation}
In order to simulate \eqref{eq:hybridinputW1} and \eqref{eq:hybridinputW2}, it is instrumental to note that the $\kappa+1$-dimensional random vectors
\begin{equation*}
\mathbf{W}^n_i := \big(W^n_i,
W^n_{i,1},\ldots,
W^n_{i,\kappa}\big), \quad i = -N_n, -N_n+1,\ldots,\lfloor nT \rfloor-1,
\end{equation*}
are i.i.d.\ according to a multivariate Gaussian distribution with mean zero and covariance matrix $\Sigma$ given by
\begin{align*}
\Sigma_{1,1} & = \frac{1}{n}, & \Sigma_{1,j} = \Sigma_{j,1} & = \frac{(j-1)^{\alpha+1}-(j-2)^{\alpha+1}}{(\alpha+1)n^{\alpha+1}}, &
\Sigma_{j,j} & = \frac{(j-1)^{2\alpha+1}-(j-2)^{2\alpha+1}}{(2\alpha+1)n^{2\alpha+1}}, 
\end{align*}
for $j = 2,\ldots,\kappa+1$, and
\begin{multline}\label{eq:Wiener-cov}
 \Sigma_{j,k}  =  \frac{1}{(\alpha+1) n^{2\alpha+1}}\Bigg((j-1)^{\alpha+1}(k-1)^\alpha {}_2 F_1\bigg(-\alpha,1,\alpha+2,\frac{j-1}{k-1}\bigg) \\
 - (j-2)^{\alpha+1}(k-2)^{\alpha} {}_2 F_1\bigg(-\alpha,1,\alpha+2,\frac{j-2}{k-2}\bigg)\Bigg),
\end{multline}
for $j$,\ $k = 2,\ldots,\kappa+1$ such that $j < k$, where ${}_2 F_1$ stands for the \emph{Gauss hypergeometric function}, see, e.g., \cite[p.\ 56]{EMOT1953} for the definition. (When $k < j$, set $\Sigma_{j,k} = \Sigma_{k,j}$.) For the convenience of the reader, we provide a proof of \eqref{eq:Wiener-cov} in Section \ref{ssec:hyper}.

Thus, $\{\mathbf{W}^n_i\}_{i=-N_n}^{\lfloor nT \rfloor-1}$ can be generated by taking independent draws from the multivariate Gaussian distribution $N_{\kappa+1}(\mathbf{0},\Sigma)$. If the volatility process $\sigma$ is independent of $W$, then $\{\sigma^n_i\}_{i=-N_n}^{\lfloor nT \rfloor-1}$ can be generated separately, possibly using exact methods. (Exact methods are available, e.g., for Gaussian processes, as mentioned in the introduction, and diffusions, see \cite{BeskosRoberts05}.) In the case where $\sigma$ depends on $W$, simulating $\{\mathbf{W}^n_i\}_{i=-N_n}^{\lfloor nT \rfloor-1}$ and $\{\sigma^n_i\}_{i=-N_n}^{\lfloor nT \rfloor-1}$ is less straightforward. That said, if $\sigma$ is driven by a standard Brownian motion $Z$, correlated with $W$, say, one could rely on a factor decomposition
\begin{equation}\label{eq:factorBm}
Z(t) := \rho W(t) + \sqrt{1-\rho^2} W_\perp(t), \quad t \in \R,
\end{equation}
where $\rho \in [-1,1]$ is the correlation parameter and $\{W_\perp(t)\}_{t \in [0,T]}$ is a standard Brownian motion independent of $W$. Then one would first generate $\{\mathbf{W}^n_i\}_{i=-N_n}^{\lfloor nT \rfloor-1}$, use \eqref{eq:factorBm} to generate $\{ Z(\frac{i+1}{n})-Z(\frac{i}{n}) \}_{i=-N_n}^{\lfloor nT \rfloor-1}$ and employ some appropriate approximate method to produce $\{\sigma^n_i\}_{i=-N_n}^{\lfloor nT \rfloor-1}$ thereafter. This approach has, however, the caveat that it induces an additional approximation error, not quantified in Theorem \ref{th:mainTh}.

\begin{remark} In the case of the $\mathcal{TBSS}$ process $Y$, introduced in Section \ref{sec:tbss}, the observations $Y_n( \frac{i}{n})$, $i = 0,1,\ldots,\lfloor nT \rfloor$, given by the hybrid scheme \eqref{eq:Yhybrid} can be computed via
\begin{equation}\label{eq:Yhybridkey}
Y_n\bigg(\frac{i}{n}\bigg) = \sum_{k=1}^{\min\{ i, \kappa \}} L_g\bigg( \frac{k}{n}\bigg) \sigma^n_{i-k} W^n_{i-k,k} + \sum_{k=\kappa+1}^{i} g\bigg( \frac{b^*_k}{n}\bigg) \sigma^n_{i-k} W^n_{i-k},
\end{equation}
using the random vectors $\{\mathbf{W}^n_i\}_{i=0}^{\lfloor nT \rfloor-1}$ and random variables $\{\sigma^n_i\}_{i=0}^{\lfloor nT \rfloor-1}$.
\end{remark}

In the hybrid scheme, it typically suffices to take $\kappa$ to be at most $3$. Thus, in \eqref{eq:hybridkey}, the first sum $\check{X}_{n}(\frac{i}{n})$ requires only a negligible computational effort. By contrast, the number of terms in the second sum $\hat{X}_{n}(\frac{i}{n})$ increases as $n \rightarrow \infty$. It is then useful to note that
\begin{equation*}
\hat{X}_n \bigg( \frac{i}{n} \bigg) = \sum_{k=1}^{N_n} \Gamma_k \Xi_{i-k} = (\Gamma \star \Xi)_i,
\end{equation*}
where
\begin{align*}
\Gamma_k & := \begin{cases}
0, & k = 1,\ldots,\kappa, \\
g\big( \frac{b^*_k}{n} \big),& k = \kappa + 1,\kappa+2,\ldots,N_n,
\end{cases} \\
\Xi_k & := \sigma^n_k W^n_k, \quad k=-N_n, -N_n+1,\ldots,\lfloor nT \rfloor-1.
\end{align*}
and $\Gamma \star \Xi$ stands for the discrete convolution of the sequences $\Gamma$ and $\Xi$.
 It is well-known that the discrete convolution can be evaluated efficiently using a fast Fourier transform (FFT). The computational complexity of simultaneously evaluating $(\Gamma \star \Xi)_i$ for all $i = 0,1,\ldots,\lfloor nT \rfloor$ using an FFT is $\mathcal{O}(N_n \log N_n)$, see \cite[pp.\ 79--80]{Mallat09}, which under \eqref{ass:truncation} translates to $\mathcal{O}(n^{\gamma+1} \log n)$. The computational complexity of the entire hybrid scheme is then $\mathcal{O}(n^{\gamma+1} \log n)$, provided that $\{\sigma^n_i\}_{i=-N_n}^{\lfloor nT \rfloor-1}$ is generated using a scheme with complexity not exceeding $\mathcal{O}(n^{\gamma+1} \log n)$.
As a comparison, we mention that the complexity of an exact simulation of a stationary Gaussian process using circulant embeddings is $\mathcal{O}(n \log n)$ \cite[p.\ 316]{AsmussenGlynn07}, whereas the complexity of the Cholesky factorization is $\mathcal{O}(n^3)$ \cite[p.\ 312]{AsmussenGlynn07}.

\begin{remark} With $\mathcal{TBSS}$ processes, the computational complexity of the hybrid scheme via \eqref{eq:Yhybridkey} is $\mathcal{O}(n \log n)$.
\end{remark}

Figure \ref{fig:paths} presents examples of trajectories of the $\mathcal{BSS}$ process $X$ using the hybrid scheme with $\kappa = 1, 2$ and $\mathbf{b} = \mathbf{b}^*$. We choose the kernel function $g$ to be the gamma kernel (Example \ref{ex:gamma}) with $\lambda = 1$. We also discretize $X$ using the Riemann-sum scheme, $\kappa = 0$ with $\mathbf{b} \in \{  \mathbf{b}_{\mathrm{FWD}},\mathbf{b}^*\}$ (that is, the forward Riemann-sum scheme and its counterpart with optimally chosen evaluation points).
We can make two observations:
 Firstly, we see how the roughness parameter $\alpha$ controls the regularity properties of the trajectories of $X$ --- as we decrease $\alpha$, the trajectories of $X$ become increasingly rough. Secondly, and more importantly, we see how the simulated trajectories coming from the Riemann-sum and hybrid schemes can be rather different, even though we use the same innovations for the driving Brownian motion. In fact, the two variants of the hybrid scheme ($\kappa =  1, 2$) yield almost identical trajectories, while the Riemann-sum scheme ($\kappa = 0$) produces trajectories that are comparatively smoother, this difference becoming more apparent as $\alpha$ approaches $-\frac{1}{2}$. Indeed, in the extreme case with $\alpha = -0.499$, both variants of the Riemann-sum scheme break down and yield anomalous trajectories with very little variation, while the hybrid scheme continues to produce accurate results.
The fact that the hybrid scheme is able to reproduce the fine properties of rough $\BSS$ processes, even for values of $\alpha$ very close to $-\frac{1}{2}$, is backed up by a further experiment reported in the following section.

\begin{figure}[tbp] 
\centering 
\includegraphics[scale=0.88]{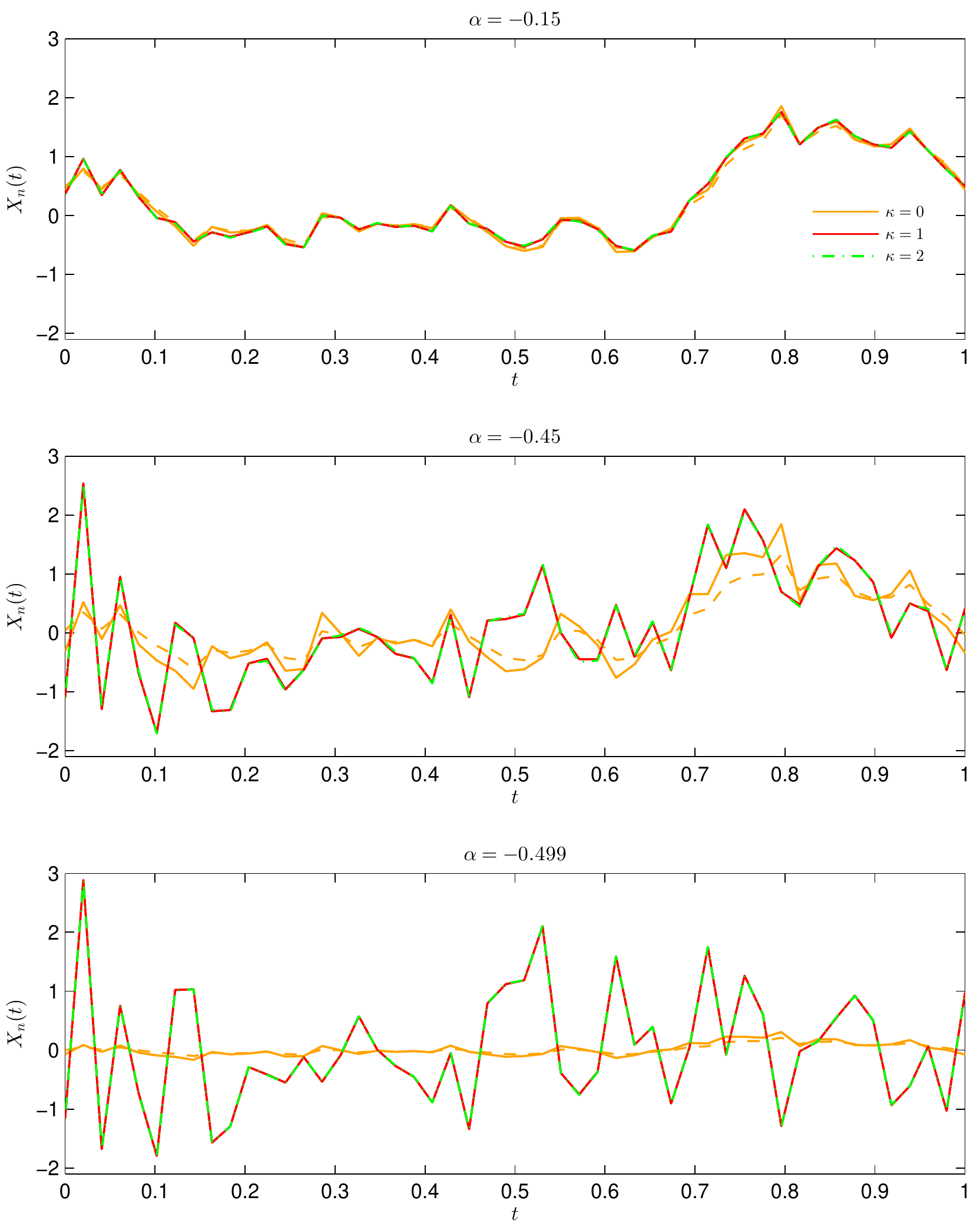} %v16
\caption{\it Discretized trajectories of a $\mathcal{BSS}$ process, where $g$ is the gamma kernel (Example \ref{ex:gamma}), $\lambda = 1$ and $\sigma(t) = 1$ for all $t\in \R$. Trajectories consisting of $n=50$ observations on $[0,1]$ were generated with the hybrid scheme ($\kappa = 1, 2$ and $\mathbf{b} = \mathbf{b}^*$) and Riemann-sum scheme ($\kappa=0$ and $\mathbf{b} = \mathbf{b}^*$ (solid lines), $\mathbf{b}=\mathbf{b}_{\mathrm{FWD}}$ (dashed lines)), using the same innovations for the driving Brownian motion in all cases and $N_n =\lfloor 50^{1.5}\rfloor = 353$. 
The simulated processes were normalized to have unit (stationary) variance.\label{fig:paths}}
\end{figure}

\subsection{Estimation of the roughness parameter}\label{sec:estimate_a}

Suppose that we have observations $X(\frac{i}{m})$, $i = 0,1,\ldots,m$, of the $\BSS$ process $X$, given by \eqref{eq:X}, for some $m \in \N$. Barndorff-Nielsen et al.\ \cite{ole_jose_mark13} and Corcuera et al.\ \cite{jose_emil_mikko_mark13} discuss how the roughness index $\alpha$ can be estimated consistently as $m \rightarrow \infty$. The method is based on the \emph{change-of-frequency} (COF) statistics
\begin{align*}
\mathrm{COF}(X,m) = \frac{\sum_{k=5}^m \big| X\big(\frac{k}{m}\big) - 2X\big(\frac{k-2}{m}\big) + X\big(\frac{k-4}{m}\big) \big|^2}{  \sum_{k=3}^m \big| X\big(\frac{k}{m}\big) - 2X\big(\frac{k-1}{m}\big) + X\big(\frac{k-2}{m}\big) \big|^2}, \quad m\geq 5,
\end{align*}
which compare the realized quadratic variations of $X$, using second-order increments, with two different lag lengths. Corcuera et al.\ \cite{jose_emil_mikko_mark13} have shown that under some assumptions on the process $X$, which are similar to \eqref{ass:zero}, \eqref{ass:deriv} and \eqref{ass:infb} albeit slightly more restrictive, it holds that \begin{align}\label{eq:estimator}
\hat{\alpha}(X,m) := \frac{\log \big(\textnormal{COF}(X,m)\big)}{2 \log 2} - \frac{1}{2} \stackrel{\mathbb{P}}{\longrightarrow} \alpha, \quad m\rightarrow \infty.
\end{align}
An in-depth study of the finite sample performance of this COF estimator can be found in \cite{bennedsen14a}.

\begin{figure}[tbp] 
\centering 
\includegraphics[scale=0.93]{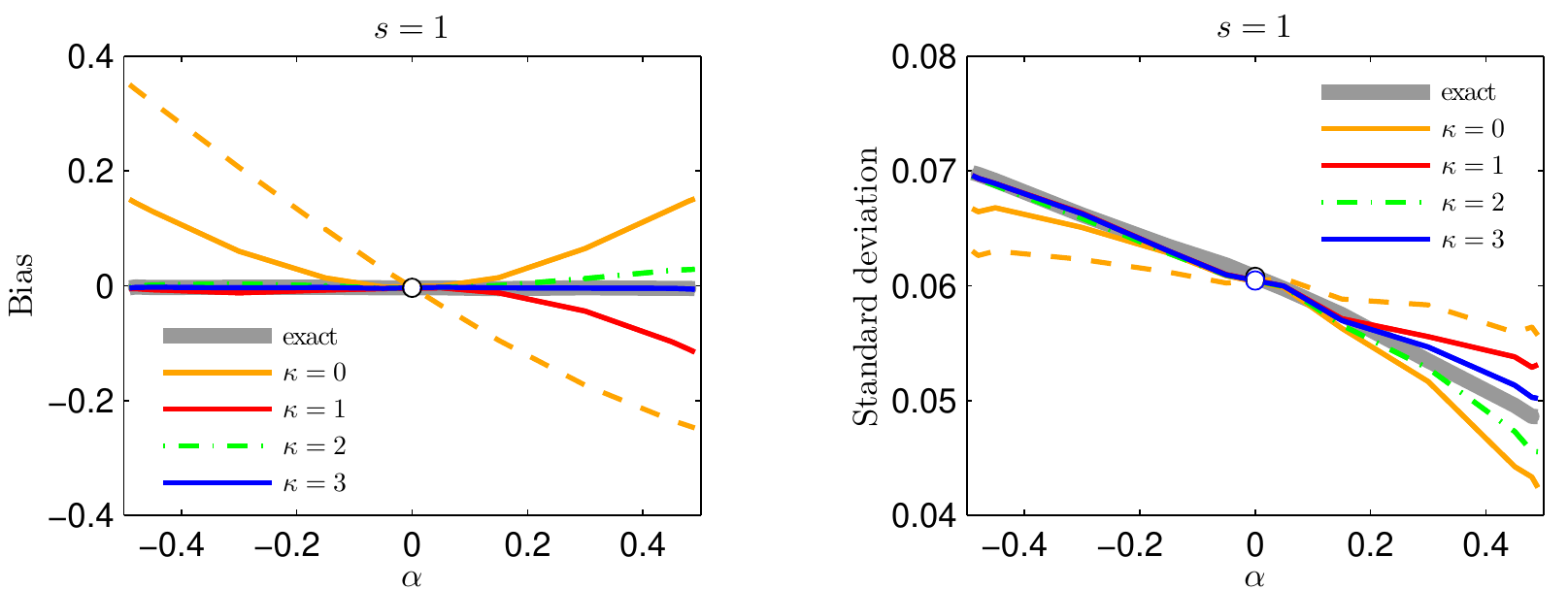} \\
\includegraphics[scale=0.93]{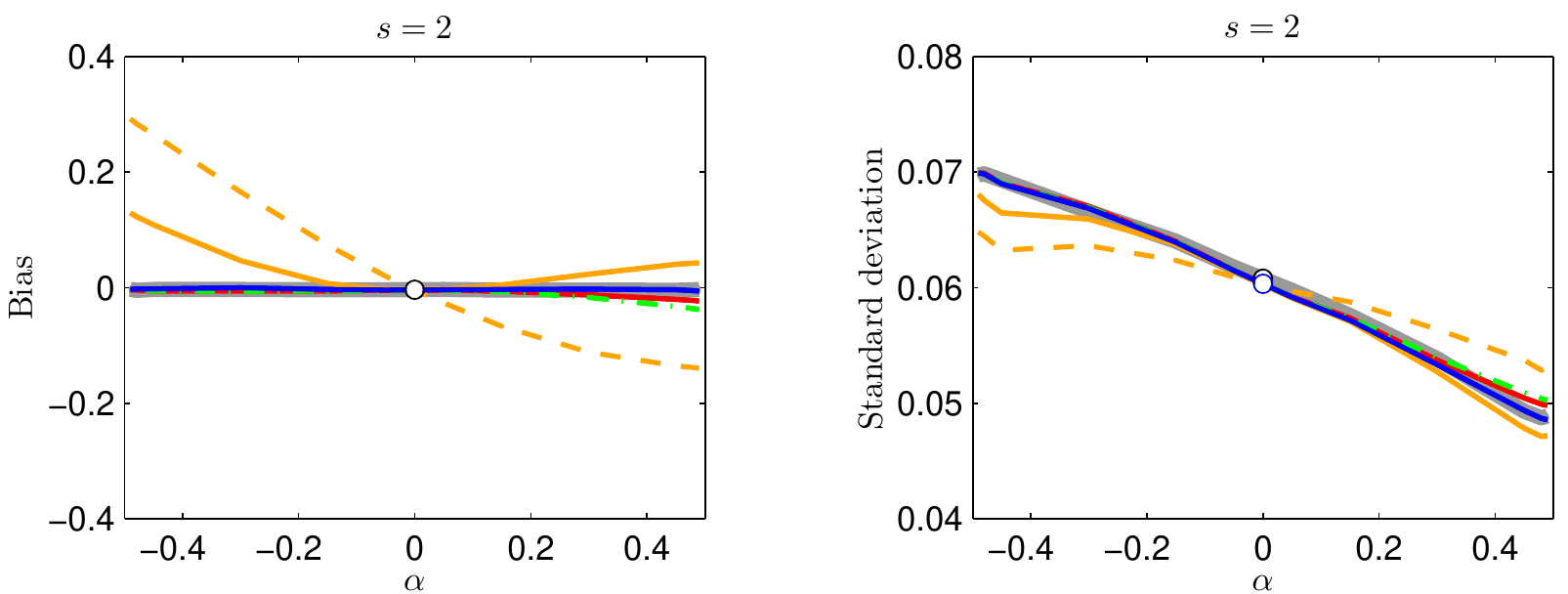} \\
\includegraphics[scale=0.93]{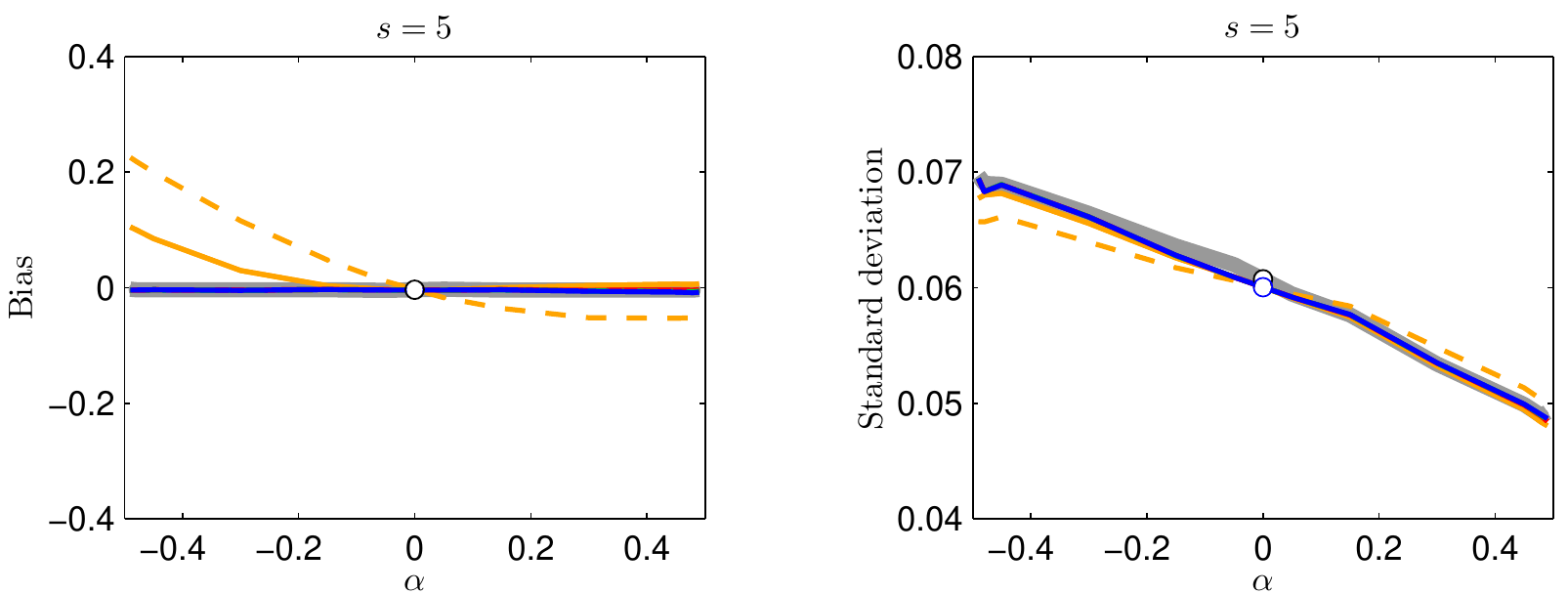}
\caption{\it Bias and standard deviation of the COF estimator \eqref{eq:estimator} of the roughness index $\alpha$, when applied to discretized trajectories of a $\BSS$ process with the gamma kernel (Example \ref{ex:gamma}), $\lambda = 1$ and $\sigma(t) = 1$ for all $t\in \R$. Trajectories were generated using an exact method based on the Cholesky factorization, the hybrid scheme ($\kappa = 1, 2, 3$ and $\mathbf{b} = \mathbf{b}^*$) and Riemann-sum scheme ($\kappa=0$ and $\mathbf{b} = \mathbf{b}^*$ (solid lines), $\mathbf{b}=\mathbf{b}_{\mathrm{FWD}}$ (dashed lines)). In the experiment, $n = ms$ observations were generated, where $m=500$ and $s \in \{1, 2, 5\}$, on $[0,1]$ using $N_n= \lfloor n^{1.5} \rfloor$.
 Every $s$-th observation was then subsampled, resulting in $m=500$ observations that were used to compute the estimate $\hat{\alpha}(X_n,m)$ of the roughness index $\alpha$. Number of Monte Carlo replications: $10\,000$.}
\label{fig:a_error}
\end{figure}

To examine how well the hybrid scheme reproduces the fine properties of the $\BSS$ process in terms of regularity/roughness, we 
apply the COF estimator to discretized trajectories of $X$, where the kernel function $g$ is again the gamma kernel (Example \ref{ex:gamma}) with $\lambda = 1$, generated using the hybrid scheme with $\kappa = 1,2,3$ and $\mathbf{b} = \mathbf{b}^*$. We consider the case where the volatility process satisfies $\sigma(t) = 1$, that is, the process $X$ is Gaussian. This allows us to quantify and control for the intrinsic bias and noisiness, measured in terms of standard deviation, of the estimation method itself, by initially applying the estimator to trajectories that have been simulated using an exact method based on the Cholesky factorization. We then study the behavior of the estimator when applied to a discretized trajectory, while decreasing the step size of the discretization scheme.
More precisely, we simulate $\hat{\alpha}(X_n,m)$, where $m=500$ and $X_n$ is the hybrid scheme for $X$ with $n = ms$ and $s \in \{1,2,5\}$. This means that we compute $\hat{\alpha}(X_n,m)$ using $m$ observations obtained by subsampling every $s$-th observation in the sequence $X_n(\frac{i}{n})$, $i = 0,1,\ldots,n$. 
As a comparison, we repeat these simulations substituting the hybrid scheme with the Riemann-sum scheme, using $\kappa = 0$ with $\mathbf{b}\in \{\mathbf{b}_{\mathrm{FWD}},\mathbf{b}^*\}$. 

The results are presented in Figure \ref{fig:a_error}. We observe that the intrinsic bias of the estimator with $m = 500$ observations is negligible and hence the bias of the estimates computed from discretized trajectories is then attributable to approximation error arising from the respective discretization scheme, where positive (resp.\ negative) bias indicates that the simulated trajectories are smoother (resp.\ rougher) than those of the process $X$. Concentrating first on the baseline case $s = 1$, we note that the hybrid scheme produces essentially unbiased results when $\alpha \in (-\frac{1}{2},0)$, while there is moderate bias when $\alpha \in (0,\frac{1}{2})$, which disappears when passing from $\kappa=1$ to $\kappa=3$, even for values of $\alpha$ very close to $\frac{1}{2}$. (The largest value of $\alpha$ considered in our simulations is $\alpha = 0.49$; one would expect the performance to weaken as $\alpha$ approaches $\frac{1}{2}$, cf.\ Figure \ref{fig:optimal}, but this range of parameter values seems to be of limited practical interest.) The standard deviations exhibit a similar pattern.
The corresponding results for the Riemann-sum scheme are clearly inferior, exhibiting significant bias, while using optimal evaluation points ($\mathbf{b} = \mathbf{b}^*$) improves the situation slightly. In particular, the bias in the case $\alpha \in (-\frac{1}{2},0)$ is positive, indicating too smooth discretized trajectories, which is connected with the failure of the Riemann-sum scheme with $\alpha$ near $-\frac{1}{2}$, illustrated in Figure \ref{fig:paths}.
With $s=2$ and $s=5$, the results improve with both schemes. Notably, in the case $s=5$, the performance of the hybrid scheme even with $\kappa = 1$ is on a par with the exact method. However, the improvements with the Riemann-sum scheme are more meager, as considerable bias persists when $\alpha$ is near $-\frac{1}{2}$.

\subsection{Option pricing under rough volatility}\label{sec:roughvol}
As another experiment, we study Monte Carlo option pricing in the \emph{rough Bergomi} (rBergomi) model of Bayer et al.\ \cite{BFG2015}. In the rBergomi model, the logarithmic spot variance of the price of the underlying is modelled by a rough Gaussian process, which is a special case of \eqref{eq:Y}. By virtue of the rough volatility process, the model fits well to observed implied volatility smiles \cite[pp.\ 893--895]{BFG2015}.

More precisely, the price of the underlying in the rBergomi model with time horizon $T>0$ is defined, under an equivalent martingale measure identified with $\mathbb{P}$, as
\begin{equation*}
S(t)  := S(0) \exp \bigg( \int_0^t \sqrt{v(s)} dZ(s) - \frac{1}{2} \int_0^t v(s) ds\bigg), \quad t \in [0,T],
\end{equation*}
using the spot variance process
\begin{equation*}
v(t)  := \xi_0(t) \exp \bigg( \eta \underbrace{\sqrt{2\alpha + 1} \int_0^t (t-s)^\alpha d W(s)}_{=: Y(t)} - \frac{\eta^2}{2} t^{2\alpha + 1}\bigg), \quad t \in [0,T].
\end{equation*}
Above, $S(0)>0$, $\eta >0$ and $\alpha \in (-\frac{1}{2},0)$ are deterministic parameters, and $Z$ is a standard Brownian motion given by
\begin{equation}\label{eq:factor}
Z(t) := \rho W(t) + \sqrt{1-\rho^2} W_\perp(t), \quad t \in [0,T],
\end{equation}
where $\rho \in (-1,1)$ is the correlation parameter and $\{W_\perp(t)\}_{t \in [0,T]}$ is a standard Brownian motion independent of $W$. The process $\{\xi_0(t)\}_{t \in [0,T]}$ is the so-called \emph{forward variance curve} \cite[p.\ 891]{BFG2015}, which we assume here to be flat, $\xi_0(t) = \xi >0$ for all $t \in [0,T]$.

We aim to compute using Monte Carlo simulation the price of a European call option struck at $K>0$ with maturity $T$, which is given by
\begin{equation}\label{eq:option}
\mathrm{C}(S(0),K,T) := \E[(S(T) - K)^+].
\end{equation}
The approach suggested by Bayer et al.\ \cite{BFG2015} involves sampling the Gaussian processes $Z$ and $Y$ on a discrete time grid using exact simulation and then approximating $S$ and $v$ using Euler discretization. We modify this approach by using the hybrid scheme to simulate $Y$, instead of the computationally more costly exact simulation. As the hybrid scheme involves simulating increments of the Brownian motion $W$ driving $Y$, we can conveniently simulate the increments of $Z$, needed for the Euler discretization of $S$, using the representation \eqref{eq:factor}.

\begin{table}[!t]
\vspace{1em} 
\centering 
\begin{tabular}{ccccc}
\toprule
$S(0)$ & $\xi$     & $\eta$ & $\alpha$ & $\rho$ \\
\midrule
$1$    & $0.235^2$ & $1.9$  & $-0.43$  & $-0.9$ \\
\bottomrule
\end{tabular}
\caption{\it Parameter values used in the rBergomi model.}
\label{tab:pars}
\end{table}

We map the option price $\mathrm{C}(S(0),K,T)$ to the corresponding Black--Scholes implied volatility $\mathrm{IV}(S(0),K,T)$, see, e.g., \cite{gatheral2006}. Reparameterizing the implied volatility using the log-strike $k := \log (K/S(0))$ allows us to drop the dependence on the initial price, so we will abuse notation slightly and write $\mathrm{IV}(k,T)$ for the corresponding implied volatility. Figure \ref{fig:IVsmile} displays implied volatility smiles obtained from the rBergomi model using the hybrid and Riemann-sum schemes to simulate $Y$, as discussed above, and compares these to the smiles obtained using an exact simulation of $Y$ via Cholesky factorization. The parameter values are given in Table \ref{tab:pars}. They have been adopted from Bayer et al.\ \cite{BFG2015}, who demonstrate that they result in realistic volatility smiles.
We consider two different maturities: ``short'', $T = 0.041$, and ``long'', $T = 1$. 

\begin{figure}[!t] 
\centering
\begin{tabular}{rl} 
\includegraphics[scale=0.923]{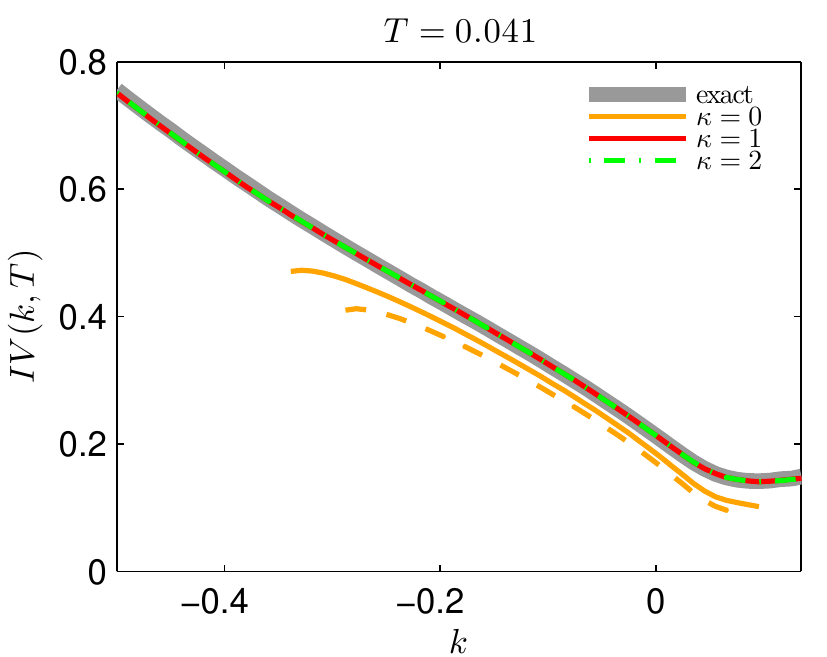} & \includegraphics[scale=0.923]{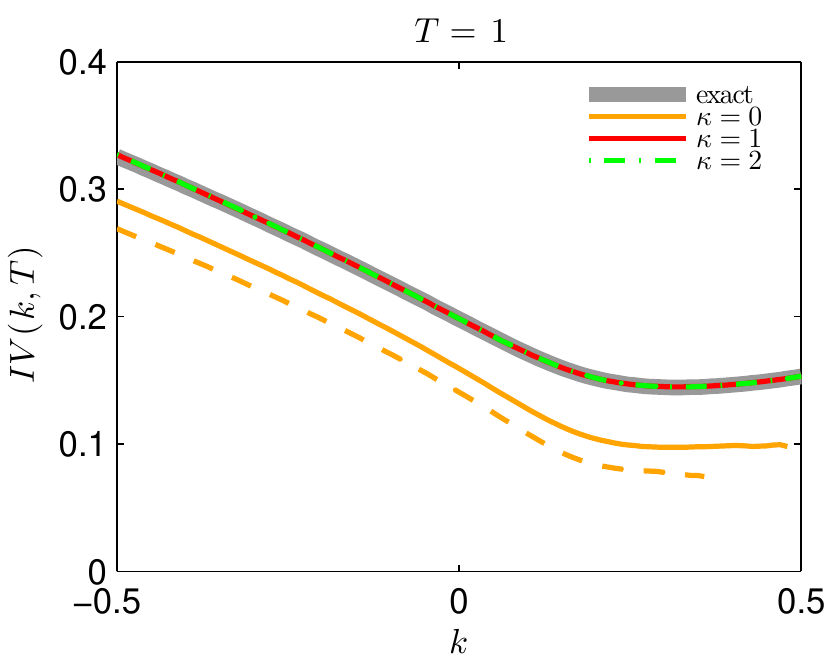}
\end{tabular}
\caption{\it Implied volatility smiles corresponding to the option price \eqref{eq:option}, computed using Monte Carlo simulation ($500$ time steps, $1\,000\,000$ replications), with two maturities: $T = 0.041$ (left) and  $T = 1$ (right).  The spot variance process $v$ was simulated using an exact method, the hybrid scheme ($\kappa=1,2$ and $\mathbf{b} = \mathbf{b}^*$) and Riemann-sum scheme ($\kappa=0$ and $\mathbf{b} = \mathbf{b}^*$ (solid lines), $\mathbf{b}=\mathbf{b}_{\mathrm{FWD}}$ (dashed lines)). The parameter values used in the rBergomi model are given in Table \ref{tab:pars}.}
\label{fig:IVsmile}
\end{figure}

We observe that the Riemann-sum scheme ($\kappa=0$, $\mathbf{b} \in \{  \mathbf{b}_{\mathrm{FWD}},\mathbf{b}^*\}$) is able capture the shape of the implied volatility smile, but not its level. Alas, the method even breaks down with more extreme log-strikes (the prices are so low that the root-finding algorithm used to compute the implied volatility would return zero). 
 In contrast, the hybrid scheme with $\kappa = 1,2$ and $\mathbf{b}=\mathbf{b}^*$ yields implied volatility smiles that are indistinguishable from the benchmark smiles obtained using exact simulation. Further, there is no discernible difference between the smiles obtained using $\kappa=1$ and $\kappa=2$.
 As in the previous section, we observe that the hybrid scheme is indeed capable of producing very accurate trajectories of $\mathcal{TBSS}$ processes, in particular in the case $\alpha \in (-\frac{1}{2},0)$, even when $\kappa = 1$.

\section{Proofs}\label{sec:proofs}

Throughout the proofs below, we rely on two useful inequalities. The first one is the Potter bound for slow variation at $0$, which follows immediately from the corresponding result for slow variation at $\infty$ \cite[Theorem 1.5.6]{BGT}. Namely, if $L : (0,1] \rightarrow (0,\infty)$ is slowly varying at $0$ and bounded away from $0$ and $\infty$ on any interval $(u,1]$, $u \in (0,1)$, then for any $\delta>0$ there exists a constant $C_\delta >0$ such that
\begin{equation}\label{eq:potterbounds}
\frac{L(x)}{L(y)} \leq C_\delta \max\bigg\{\Big( \frac{x}{y}\Big)^\delta, \Big(\frac{x}{y}\Big)^{-\delta}\bigg\}, \quad x,y \in (0,1].
\end{equation}
The second one is the elementary inequality
\begin{equation}\label{eq:powerbounds}
|x^\alpha - y^\alpha| \leq |\alpha|(\min \{x,y\})^{\alpha-1}|x-y|, \quad x,y \in (0,\infty), \quad \alpha \in (-\infty,1),
\end{equation}
which can be easily shown using the mean value theorem. Additionally, we use the following variant of Karamata's theorem for regular variation at $0$. Its proof is similar to the one of the usual Karamata's theorem  for regular variation at $\infty$ \cite[Proposition 1.5.10]{BGT}.

\begin{lemma}[Karamata's theorem]\label{lem:karamatazero}
If $\alpha \in (-1,\infty)$ and $L : (0,1] \rightarrow [0,\infty)$ is slowly varying at $0$, then
\begin{equation*}
\int_0^y x^\alpha L(x) dx \sim \frac{1}{\alpha+1} y^{\alpha + 1} L(y), \quad y \rightarrow 0.
\end{equation*}
\end{lemma}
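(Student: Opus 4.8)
The plan is to reduce the claim to the classical Karamata theorem for regular variation at infinity by the substitution $x = 1/u$, the same device underlying the remark that $L$ is slowly varying at $0$ precisely when $u \mapsto L(1/u)$ is slowly varying at $\infty$.

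First I would set $\tilde{L}(u) := L(1/u)$ for $u \in [1,\infty)$, which is slowly varying at $\infty$ by that remark. Changing variables $x = 1/u$ gives, for $y \in (0,1]$,
$$\int_0^y x^\alpha L(x)\,dx = \int_{1/y}^\infty u^{-\alpha-2}\tilde{L}(u)\,du.$$
Since $\alpha > -1$, the exponent $\sigma := -\alpha-2$ is $<-1$, so a Potter bound for $\tilde{L}$ \cite[Theorem 1.5.6]{BGT} shows $\tilde{L}(u) = \mathcal{O}(u^{\varepsilon})$ for any $\varepsilon>0$ as $u \to \infty$, hence the right-hand integral converges; in particular both sides are finite and the change of variables is legitimate.

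Next I would apply the classical Karamata theorem \cite[Proposition 1.5.10]{BGT} in its ``$\int_x^\infty$'' form: for $\ell$ slowly varying at $\infty$ and $\sigma < -1$ with $\int_1^\infty t^\sigma \ell(t)\,dt < \infty$,
$$\int_x^\infty t^\sigma \ell(t)\,dt \sim \frac{-1}{\sigma+1}\,x^{\sigma+1}\ell(x), \quad x \rightarrow \infty.$$
Taking $\ell = \tilde{L}$ and $\sigma = -\alpha-2$, so that $\sigma+1 = -\alpha-1$ and $\tfrac{-1}{\sigma+1} = \tfrac{1}{\alpha+1}$, and then substituting $x = 1/y$ (noting that $x \to \infty$ corresponds to $y \to 0$) yields
$$\int_0^y x^\alpha L(x)\,dx \sim \frac{1}{\alpha+1}\,(1/y)^{-\alpha-1}\tilde{L}(1/y) = \frac{1}{\alpha+1}\,y^{\alpha+1} L(y), \quad y \rightarrow 0,$$
which is the assertion.

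I do not expect any real obstacle here; the only mildly delicate points are justifying the change of variables and the convergence of the tail integral, both handled by the Potter bound, and correctly tracking the sign in the Karamata constant $\tfrac{-1}{\sigma+1}$. Alternatively, one can give a self-contained proof that parallels \cite[Proposition 1.5.10]{BGT} verbatim, splitting $\int_0^y = \int_0^{\varepsilon y} + \int_{\varepsilon y}^y$ and using the uniform convergence theorem for slowly varying functions on the second piece together with a Potter bound to control the first; the substitution argument above is merely a shortcut that avoids rewriting this.
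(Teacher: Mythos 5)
Your proof is correct. The paper itself gives no argument for this lemma; it merely remarks that the proof is ``similar to'' that of the classical Karamata theorem for regular variation at $\infty$ (BGT, Proposition 1.5.10), i.e.\ it invites the reader to redo that proof with $\infty$ replaced by $0$. You instead transfer the statement outright by the inversion $x=1/u$, turning $\int_0^y x^\alpha L(x)\,dx$ into the tail integral $\int_{1/y}^\infty u^{-\alpha-2}\tilde L(u)\,du$ with exponent $-\alpha-2<-1$, and then apply Proposition 1.5.10 as a black box --- exactly the shortcut you acknowledge at the end, and exactly the dictionary the paper's own Remark 2.1 sets up between slow variation at $0$ and at $\infty$. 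The bookkeeping is right: the constant $\tfrac{-1}{\sigma+1}$ with $\sigma=-\alpha-2$ gives $\tfrac{1}{\alpha+1}$, and the convergence of the tail integral follows from the eventual Potter bound (Theorem 1.5.6(i), which needs no global boundedness hypothesis since only large $u$ matters), so the change of variables is harmless for the nonnegative measurable integrand. What your route buys is a complete two-line reduction that reuses the cited result verbatim; what the paper's suggested route buys is independence from the inversion trick, at the cost of rerunning the uniform-convergence/Potter argument near $0$ --- your closing remark correctly identifies that second option as the one the authors had in mind.
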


\subsection{Proof of Proposition \ref{prop:localbehavior}}\label{ssec:proofs1} 

\begin{proof}[Proof of Proposition \ref{prop:localbehavior}]
\eqref{item:vario} By the covariance stationarity of the volatility process $\sigma$, we may express the variogram $V(h)$ for any $h \geq 0$ as
\begin{equation}\label{eq:variogramformula}
\begin{split}
V(h) = \mathbb{E}[|X(h) - X(0)|^2] & = \int_{-\infty}^h \big(g(h-u) - g(-u)\mathbf{1}_{(-\infty,0)}(u)\big)^2 \mathbb{E}[\sigma(u)^2] du \\
& = \mathbb{E}[\sigma(0)^2] \bigg(\int_{0}^h g(x)^2 d x  + \int_0^\infty (g(x+h)-g(x))^2 dx\bigg). 
\end{split}
\end{equation}
Invoking \eqref{ass:zero} and Lemma \ref{lem:karamatazero}, we find that
\begin{equation}\label{eq:gnearzero}
\int_{0}^h g(x)^2 d x \sim \frac{1}{2\alpha + 1} h^{2\alpha + 1} L_g(h)^2, \quad h \rightarrow 0.
\end{equation}
We may clearly assume that $h<1$, which allows us to work with the decomposition
\begin{equation*}
\int_0^\infty (g(x+h)-g(x))^2 dx = A_h + A'_h,
\end{equation*}
where
\begin{equation*}
A_h :=\int_{0}^{1-h} (g(x+h)-g(x))^2 dx, \quad A'_h := \int_{1-h}^{\infty} (g(x+h)-g(x))^2 dx.
\end{equation*}

According to \eqref{ass:deriv}, there exists $M>1$ such that $x \mapsto |g'(x)|$ is non-increasing on $[M,\infty)$. Thus, using the mean value theorem, we deduce that
\begin{equation*}
|g(x+h) - g(x)| = |g'(\xi)|h \leq \begin{cases}
\sup_{y \in (1-h,M]}|g'(y)| h, & x \in (1-h,M),\\
|g'(x)|h, & x \in [M,\infty).
\end{cases}
\end{equation*}
where $\xi = \xi(x,h) \in [x,x+h]$. It follows then that
\begin{equation*}
\limsup_{h \rightarrow 0} \frac{A'_h}{h^2} \leq (M-1) \sup_{y \in [1,M]}g'(y)^2 + \int_1^\infty g'(x)^2 dx < \infty,
\end{equation*}
which in turn implies that
\begin{equation}\label{eq:gnegligible}
A'_h = \mathcal{O}(h^2), \quad h \rightarrow 0.
\end{equation}

Making a substitution $y=\frac{x}{h}$, we obtain
\begin{equation*}
\begin{split}
A_h = \int_{0}^{1-h} (g(x+h)-g(x))^2 dx & = h \int_{0}^{1/h-1} \big(g(h(y+1))-g(hy)\big)^2 dy \\
& = h^{2\alpha + 1} L_g(h)^2\int_{0}^{\infty} G_h(y) dy, 
\end{split}
\end{equation*}
where
\begin{equation*}
G_h(y) := \bigg((y+1)^\alpha \frac{L_g(h(y+1))}{L_g(h)}-y^\alpha\frac{L_g(hy)}{L_g(h)}\bigg)^2\mathbf{1}_{(0,1/h-1)}(y), \quad y \in (0,\infty).
\end{equation*}
By the definition of slow variation at $0$,
\begin{equation*}
\lim_{h \rightarrow 0} G_h(y) = \big((y+1)^\alpha-y^\alpha\big)^2, \quad \quad y \in (0,\infty).
\end{equation*}
We shall show below that the functions $G_h$, $h \in (0,1)$, have an integrable dominant. Thus, by the dominated convergence theorem,
\begin{equation}\label{eq:dctappl}
A_h \sim h^{2\alpha + 1 } L_g(h)^2 \int_0^\infty \big((y+1)^\alpha-y^\alpha\big)^2 dy, \quad h \rightarrow 0. 
\end{equation}
Since $\alpha < \frac{1}{2}$, we have $\lim_{h \rightarrow 0} \frac{A'_h}{h^{2\alpha + 1 } L_g(h)^2}=0$ by \eqref{eq:potterzero} and \eqref{eq:gnegligible}, so we get from \eqref{eq:gnearzero} and \eqref{eq:dctappl}
\begin{multline*}
\int_{0}^h g(x)^2 d x  + \int_0^\infty (g(x+h)-g(x))^2 dx \\ \sim \bigg(\frac{1}{2\alpha + 1} + \int_0^\infty \big( (y+1)^\alpha - y^\alpha\big)^2 dy\bigg) h^{2\alpha+1} L_g(h)^2, \quad h \rightarrow 0,
\end{multline*}
which, together with \eqref{eq:variogramformula}, implies the assertion.

It remains to justify the use of the dominated convergence theorem to deduce \eqref{eq:dctappl}.
For any $y \in (0,1]$, we have by the Potter bound \eqref{eq:potterbounds} and the elementary inequality $(u+v)^2 \leq 2u^2+2v^2$,
\begin{equation*}
\begin{split}
G_h(y) & \leq 2(y+1)^{2\alpha} \bigg(\frac{L_g(h(y+1))}{L_g(h)}\bigg)^2 + 2y^{2\alpha} \bigg(\frac{L_g(hy)}{L_g(h)}\bigg)^2 \\
& \leq 2C^2_{\delta_1}\big((y+1)^{2(\alpha+\delta_1)} + y^{2(\alpha-\delta_1)} \big),
\end{split}
\end{equation*}
where we choose $\delta_1 \in (0,\alpha+\frac{1}{2})$ to ensure that $2(\alpha-\delta_1) > -1$.
Consider then $y \in [1,\infty)$. By adding and substracting the term $(y+1)^\alpha \frac{L_g(hy)}{L_g(h)}$ and using again the inequality $(u+v)^2 \leq 2u^2+2v^2$, we get   
\begin{equation*}
\begin{split}
G_h(y) 
 & = \bigg((y+1)^\alpha \frac{L_g(h(y+1))}{L_g(h)} -(y+1)^\alpha \frac{L_g(hy)}{L_g(h)} \\
 & \quad +(y+1)^\alpha \frac{L_g(hy)}{L_g(h)} - y^\alpha \frac{L_g(hy)}{L_g(h)}\bigg)^2\mathbf{1}_{(0,1/h-1)}(y)\\
& \leq 2 (y+1)^{2\alpha}\bigg(\frac{L_g(h(y+1))-L_g(hy)}{L_g(h)}\mathbf{1}_{(0,1/h-1)}(y)\bigg)^2\\
& \qquad + 2\big((y+1)^\alpha-y^\alpha\big)^2 \bigg(\frac{L_g(hy)}{L_g(h)}\mathbf{1}_{(0,1/h-1)}(y)\bigg)^2 .
\end{split}
\end{equation*}
We recall that $\underline{L}_g := \inf_{x \in (0,1]} L_g(x) >0$ by \eqref{ass:zero}, so
\begin{equation*}
\bigg| \frac{L_g(h(y+1))-L_g(hy)}{L_g(h)}\mathbf{1}_{(0,1/h-1)}(y)\bigg| \leq \frac{1}{\underline{L}_g} |L_g(h(y+1))-L_g(hy)|\mathbf{1}_{(0,1/h-1)}(y).
\end{equation*}
Using the mean value theorem and the bound for the derivative of $L_g$ from \eqref{ass:zero}, we observe that
\begin{equation*}
|L_g(h(y+1))-L_g(hy)| = |L'_g(\xi)| |h(y+1)-hy| \leq h C \bigg(1+\frac{1}{\xi}\bigg) \leq C \bigg(h+\frac{1}{y}\bigg),
\end{equation*}
where $\xi = \xi(y,h) \in [hy,h(y+1)]$. Noting that the constraint $y < \frac{1}{h}-1$ is equivalent to $h < \frac{1}{y+1}$, we obtain further
\begin{equation*}
\bigg| \frac{L_g(h(y+1))-L_g(hy)}{L_g(h)}\mathbf{1}_{(0,1/h-1)}(y)\bigg| \leq \frac{C}{\underline{L}_g} \bigg(h+\frac{1}{y}\bigg)\mathbf{1}_{(0,1/h-1)}(y) \leq \frac{C}{\underline{L}_g} \bigg(\frac{1}{y+1}+ \frac{1}{y}\bigg) \leq \frac{C}{\underline{L}_g}\frac{3}{ y+1},
\end{equation*}
as $y\geq 1$, which we then use to deduce that
\begin{equation*}
 2 (y+1)^{2\alpha}\bigg(\frac{L_g(h(y+1))-L_g(hy)}{L_g(h)}\mathbf{1}_{(0,1/h-1)}(y)\bigg)^2 \leq \frac{18 C^2}{\underline{L}^2_g}(y+1)^{2(\alpha-1)}.
\end{equation*}
Additionally, we observe that, by \eqref{eq:potterbounds} and \eqref{eq:powerbounds},
\begin{equation*}
2\big((y+1)^\alpha-y^\alpha\big)^2 \bigg(\frac{L_g(hy)}{L_g(h)}\mathbf{1}_{(0,1/h-1)}(y)\bigg)^2 \leq 2 C^2_{\delta_2} \alpha^2 y^{2(\alpha-1 +\delta_2)},
\end{equation*}
where we choose $\delta_2 \in (0,\frac{1}{2}-\alpha)$, ensuring that $2(\alpha-1+\delta_2) < -1$. We may finally define a function
\begin{equation*}
G(y) := \begin{cases}
2C^2_{\delta_1}\big((y+1)^{2(\alpha+\delta_1)} + y^{2(\alpha-\delta_1)} \big), & y \in (0,1],\\
\frac{18 C^2}{\underline{L}^2_g}(y+1)^{2(\alpha-1)}+ 2 C^2_{\delta_2} \alpha^2  y^{2(\alpha-1 +\delta_2)}, & y \in (1,\infty),
\end{cases}
\end{equation*}
which satisfies $0 \leq G_h(y) \leq G(y)$ for any $y \in (0,\infty)$ and $h \in (0,1)$, and is integrable on $(0,\infty)$ with the aforementioned choices of $\delta_1$ and $\delta_2$.

\eqref{item:holder} 
To show existence of the modification, we need a localization procedure that involves an ancillary process
\begin{equation*}
F(t) := \int_1^\infty g'(s)^2 \sigma(t-s)^2 ds, \quad t \in \R.
\end{equation*}
We check first that $F$ is locally bounded under \eqref{ass:zero} and \eqref{ass:deriv}, which is essential for localization. To this end, let $T \in (0,\infty)$, and write for any $t \in [-T,T]$,
\begin{equation*}
F(t) = F^\flat(t) + F^\sharp(t),
\end{equation*}
where
\begin{equation*}
F^\flat(t) := \int_1^{M+2T} g'(s)^2 \sigma(t-s)^2 ds, \quad F^\sharp(t) := \int_{M+2T}^\infty g'(s)^2 \sigma(t-s)^2 ds,
\end{equation*}
and $M>1$ is such that $x \mapsto |g'(x)|$ is non-increasing on $[M,\infty)$, as in the proof of \eqref{item:vario}.

Since $g'$ is continuous on $(0,\infty)$ and $\sigma$ locally bounded, we have for any $t \in [-T,T]$,
\begin{equation*}
0 \leq F^\flat(t) \leq (M+2T-1) \sup_{y \in [1,M+2T]} g'(y)^2 \sup_{u \in [-M-3T,T-1]}\sigma(u)^2 <\infty.
\end{equation*}
Further, when $t \in [-T,T]$,
\begin{equation*}
F^\sharp(t) = \int_{-\infty}^{t-(M+2T)} g'(t-u)^2 \sigma(u)^2 du, 
\end{equation*}
where $g'(t-u)^2 \leq g'(-T-u)^2$ since the arguments satisfy
\begin{equation*}
t-u \geq -T-u \geq -T-\big(t-(M+2T)\big) \geq M.
\end{equation*}
Thus,
\begin{equation*}
0 \leq F^\sharp(t) \leq \int_{-\infty}^{-(M+T)} g'(-T-u)^2 \sigma(u)^2 du \leq \int_{1}^{\infty} g'(s)^2 \sigma(-T-s)^2 ds <\infty
\end{equation*}
for any $t \in [-T,T]$ almost surely, as we have \begin{equation*}
\mathbb{E}\bigg[\int_1^\infty g'(s)^2 \sigma(-T-s)^2 ds\bigg] = \int_1^\infty g'(s)^2 \mathbb{E}[\sigma(-T-s)^2] ds = \E[\sigma(0)^2] \int_1^\infty g'(s)^2 ds < \infty,
\end{equation*}
where we change the order of expectation and integration relying on Tonelli's theorem and where the final equality follows from the covariance stationarity of $\sigma$. So we can conclude that $F$ is indeed locally bounded.

Let now $m \in \N$ and, for localization, define a sequence of stopping times
\begin{equation*}
\tau_{m,n} := \inf\{ t \in [-m,\infty) : F(t) > n \textrm{ or } |\sigma(t)| > n \}, \quad n \in \N,
\end{equation*}
that satisfies $\tau_{m,n} \uparrow \infty$ almost surely as $n \rightarrow \infty$ since both $F$ and $\sigma$ are locally bounded. (We follow the usual convention that $\inf \varnothing = \infty$.) Consider now the modified $\BSS$ process
\begin{equation*}
X^\dag_{m,n}(t) := \int_{-\infty}^t g(t-s) \sigma(\min \{s,\tau_{m,n}\}) d W(s), \quad t \in [-m,\infty),
\end{equation*}
that coincides with $X$ on the stochastic interval $\llbracket -m,\tau_{m,n} \rrbracket$. The process $X^\dag_{m,n}$ satisfies the assumptions of \cite[Lemma 1]{ole_jose_mark11}, so for any $p>0$ there exists a constant $\hat{C}_p>0$ such that 
\begin{equation}\label{eq:varioloc}
\mathbb{E}[|X^\dag_{m,n}(s)-X^\dag_{m,n}(t)|^p] \leq \hat{C}_p V(|s-t|)^{p/2}, \quad s,t \in [-m,\infty).
\end{equation}

Using the upper bound in \eqref{eq:potterzero}, we can deduce from \eqref{item:vario} that for any $\delta > 0$ there are constants $\tilde{C}_\delta>0$ and $\underline{h}_\delta > 0$ such that
\begin{equation}\label{eq:varioupper}
V(h) \leq \tilde{C}_\delta h^{2\alpha + 1 - \delta}, \quad h \in (0,\underline{h}_\delta).
\end{equation}
Applying \eqref{eq:varioupper} to \eqref{eq:varioloc}, we get
\begin{equation*}
\mathbb{E}[|X^\dag_{m,n}(s)-X^\dag_{m,n}(t)|^p] \leq \hat{C}_p\tilde{C}^{p/2}_\delta |s-t|^{1+p(\alpha + \frac{1}{2}-\frac{\delta}{2}-\frac{1}{p})}, \quad s,t \in [-m,\infty), \quad |s-t| < \underline{h}_\delta.
\end{equation*}
We may note that $p(\alpha + \frac{1}{2}-\frac{\delta}{2}-\frac{1}{p})>0$ for small enough $\delta$ and large enough $p$ and, in particular,
\begin{equation*}
\frac{p(\alpha + \frac{1}{2}-\frac{\delta}{2}-\frac{1}{p})}{p} \uparrow \alpha + \frac{1}{2},
\end{equation*}
as $\delta \downarrow 0$ and $p \uparrow \infty$. Thus it follows from the Kolmogorov--Chentsov theorem \cite[Theorem 3.22]{kallenberg} that $X^\dag_{m,n}$ has a modification with locally $\phi$-H\"older continuous trajectories for any $\phi \in (0,\alpha + \frac{1}{2})$. Moreover, a modification of $X$ on $\R$, having locally $\phi$-H\"older continuous trajectories for any $\phi \in (0,\alpha + \frac{1}{2})$, can then by constructed from these modifications of $X^\dag_{m,n}$, $m \in \N$, $n \in \N$, by letting first $n \rightarrow \infty$ and then $m \rightarrow \infty$. 
\end{proof}

\subsection{Proof of Theorem \ref{th:mainTh}}\label{ssec:proofs2}  

As a preparation, we shall first establish an auxiliary result that deals with the asymptotic behavior of certain integrals of regularly varying functions.

\begin{lemma}\label{lem:lemma2}
Suppose that $L : (0,1] \rightarrow [0,\infty)$ is bounded away from $0$ and $\infty$ on any set of the form $(u,1]$, $u\in(0,1)$, and slowly varying at $0$. Moreover, let $\alpha \in (-\frac{1}{2},\infty) $ and $k \geq 1$. If $b \in [k-1,k] \setminus \{0\}$, then
\begin{enumerate}[label=(\roman*),ref=\roman*,leftmargin=*] 
\item\label{item:int1} ${\displaystyle \lim_{n \rightarrow \infty}\int_{k-1}^k \bigg( x^{\alpha}\frac{L(x/n)}{L(1/n)} - b^{\alpha}\frac{L(b/n)}{L(1/n)} \bigg)^2 dx = \int_{k-1}^k (x^{\alpha} - b^{\alpha})^2 dx<\infty}$,
\item\label{item:int2}${\displaystyle \lim_{n \rightarrow \infty}\int_{k-1}^k x^{2\alpha}\bigg( \frac{L(x/n)}{L(1/n)} - \frac{L(b/n)}{L(1/n)} \bigg)^2 dx = 0}$.
\end{enumerate}
\end{lemma}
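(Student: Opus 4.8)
The plan is to prove both parts by the dominated convergence theorem, splitting into the cases $k \geq 2$ and $k = 1$. First I would identify the pointwise limits of the integrands. Fix $x \in (k-1,k]$; since $L$ is slowly varying at $0$ and $x/n = x\cdot(1/n)$ with $1/n \downarrow 0$, the definition of slow variation gives $L(x/n)/L(1/n) \to 1$ as $n \to \infty$, and similarly $L(b/n)/L(1/n) \to 1$. Hence the integrand in \eqref{item:int1} converges to $(x^\alpha - b^\alpha)^2$ and the integrand in \eqref{item:int2} converges to $x^{2\alpha}(1-1)^2 = 0$ for almost every $x \in [k-1,k]$ (the point $x=0$, relevant only when $k=1$, is negligible).

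Next I would construct an integrable dominant, uniform in $n$ for $n$ past a threshold (large enough that $x/n,\,1/n \in (0,1]$, so that the limit is unaffected). Using $(u+v)^2 \leq 2u^2 + 2v^2$, the integrand in \eqref{item:int1} is at most $2x^{2\alpha}(L(x/n)/L(1/n))^2 + 2b^{2\alpha}(L(b/n)/L(1/n))^2$, and the integrand in \eqref{item:int2} at most $2x^{2\alpha}(L(x/n)/L(1/n))^2 + 2x^{2\alpha}(L(b/n)/L(1/n))^2$. The Potter bound \eqref{eq:potterbounds} (applicable since $L$ is bounded away from $0$ and $\infty$ on intervals $(u,1]$) gives $L(x/n)/L(1/n) \leq C_\delta \max\{x^\delta, x^{-\delta}\}$ for any $\delta > 0$. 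If $k \geq 2$, then $x \in [k-1,k] \subset [1,\infty)$, so $\max\{x^\delta,x^{-\delta}\} = x^\delta \leq k^\delta$, $x^{2\alpha}$ is bounded on $[k-1,k]$, and the $b$-terms are bounded (as $b \in [k-1,k]$ and $L(b/n)/L(1/n) \to 1$); every term is then bounded by a constant, trivially integrable on the bounded interval. If $k = 1$, then $x \in [0,1]$, so $\max\{x^\delta,x^{-\delta}\} = x^{-\delta}$ and $x^{2\alpha}(L(x/n)/L(1/n))^2 \leq C_\delta^2 x^{2\alpha - 2\delta}$; choosing $\delta \in (0,\alpha+\tfrac12)$ — possible exactly because $\alpha > -\tfrac12$ — makes $2\alpha - 2\delta > -1$, so this is integrable on $[0,1]$, while the remaining terms are dominated by a constant (in \eqref{item:int1}) or by $Cx^{2\alpha}$ with $2\alpha > -1$ (in \eqref{item:int2}), both integrable. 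Applying the dominated convergence theorem then yields the two limits; the same estimates (or just continuity of $x \mapsto (x^\alpha-b^\alpha)^2$ when $k\geq2$, and $2\alpha>-1$ when $k=1$) give the finiteness $\int_{k-1}^k (x^\alpha - b^\alpha)^2 dx < \infty$.

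The only non-routine point, and the one requiring care, is the case $k=1$ with $\alpha \in (-\tfrac12,0)$: there both the factor $x^{2\alpha}$ and the Potter factor $x^{-2\delta}$ produce singularities at the origin, and the slack $\delta$ in \eqref{eq:potterbounds} must be chosen small enough that $2\alpha - 2\delta > -1$. This is precisely what the hypothesis $\alpha > -\tfrac12$ makes possible, so no genuine difficulty arises.
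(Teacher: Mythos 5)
Your proof is correct and follows essentially the same route as the paper: pointwise limits from the definition of slow variation, then dominated convergence with a dominant built from $(u+v)^2 \leq 2u^2+2v^2$ and the Potter bound, splitting into $k\geq 2$ and $k=1$ and choosing $\delta \in (0,\alpha+\tfrac12)$ to keep the singularity at $0$ integrable. The paper writes out only part \eqref{item:int1} and notes \eqref{item:int2} is analogous; your explicit treatment of \eqref{item:int2} matches that analogy.
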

\begin{proof}
We only prove \eqref{item:int1} as \eqref{item:int2} can be shown similarly. By the definition of slow variation at $0$, the function
\begin{equation*}
f_n(x) := \bigg( x^{\alpha}\frac{L(x/n)}{L(1/n)} - b^{\alpha} \frac{L(b/n)}{L(1/n)}\bigg)^2, \quad x \in [k-1,k] \setminus \{0 \},
\end{equation*}
satisfies $\lim_{n \rightarrow\infty} f_n(x) = (x^{\alpha} - b^{\alpha})^2$ for any $x \in [k-1,k] \setminus \{0 \}$. In view of the dominated convergence theorem, it suffices to find an integrable dominant for the functions $f_n$, $n \in \N$. The construction of the dominant is quite similar to the one seen in the proof of Proposition \ref{prop:localbehavior}, but we provide the details for the convenience of the reader. 

Using the Potter bound \eqref{eq:potterbounds} and the inequality $(u+v)^2 \leq 2u^2+2v^2$, we find that for any $x \in [k-1,k] \setminus \{0 \}$,
\begin{equation*}
\begin{split}
0 \leq f_n(x) & \leq 2 x^{2\alpha}\bigg(\frac{L(x/n)}{L(1/n)}\bigg)^2 + 2b^{2\alpha}\bigg(\frac{L(b/n)}{L(1/n)}\bigg)^2  \\
& \leq 2C^2_\delta \Big( x^{2\alpha} \max\big\{x^\delta, x^{-\delta}\big\}^2 + b^{2\alpha} \max\big\{b^\delta, b^{-\delta}\big\}^2\Big) =: f(x),
\end{split}
\end{equation*}
where we choose $\delta \in (0,\alpha+\frac{1}{2})$. When $k \geq 2$, we have $x\geq 1$ and $b \geq 1$, so
\begin{equation*}
f(x) = 2C^2_\delta \big( x^{2(\alpha+\delta)}  + b^{2(\alpha+\delta)}\big)
\end{equation*}
is a bounded function of $x$ on $[k-1,k]$. When $k=1$, we have $x\leq 1$ and $b \leq 1$, implying that
\begin{equation*}
f(x) = 2C^2_\delta \big( x^{2(\alpha-\delta)}  + b^{2(\alpha-\delta)}\big),
\end{equation*}
where $2(\alpha-\delta) > - 1$ with our choice of $\delta$, so $f$ is an integrable function on $(0,1]$.
\end{proof}

\begin{proof}[Proof of Theorem \ref{th:mainTh}] Let $t \in \R$ be fixed. It will be convenient to write $X_n(t)$ as
\begin{equation*}
\begin{split}
X_n(t) & = \sum_{k=1}^\kappa \int_{t-\frac{k}{n}}^{t - \frac{k-1}{n}}  (t-s)^\alpha L_g\bigg(\frac{k}{n}\bigg) \sigma\bigg(t-\frac{k}{n}\bigg) d W(s)\\ & \quad + \sum_{k=\kappa+1}^{N_n} \int_{t-\frac{k}{n}}^{t - \frac{k-1}{n}} g\bigg(\frac{b_k}{n} \bigg) \sigma\bigg(t-\frac{k}{n}\bigg) d W(s).
\end{split}
\end{equation*}
Moreover, we introduce an ancillary approximation of $X(t)$, namely,
\begin{align*}
X'_n(t) = \sum_{k=1}^{N_n} \int_{t-\frac{k}{n}}^{t - \frac{k-1}{n}} g(t - s) \sigma\bigg(t-\frac{k}{n}\bigg) d W(s) + \int_{-\infty}^{t - \frac{N_n}{n}} g(t-s) \sigma(s) d W(s).
\end{align*}
By Minkowski's inequality, we have
\begin{align*}
\mathbb{E}\big[| X_n(t) - X(t) |^2\big]^{\frac{1}{2}} & \geq \mathbb{E}\big[| X_n(t) - X'_n (t)|^2\big]^{\frac{1}{2}} - \mathbb{E}\big[| X'_n(t) - X (t) |^2\big]^{\frac{1}{2}}, \\
\mathbb{E}\big[| X_n(t) - X(t) |^2\big]^{\frac{1}{2}} & \leq \mathbb{E}\big[| X_n(t) - X'_n (t)|^2\big]^{\frac{1}{2}} + \mathbb{E}\big[| X'_n(t) - X (t) |^2\big]^{\frac{1}{2}},
\end{align*}
which together, after taking squares, imply that
\begin{align}\label{eq:minkowski}
E_n\Bigg(1-2\sqrt{\frac{E'_n}{E_n}} + \frac{E'_n}{E_n}\Bigg)  \leq \mathbb{E}\big[| X_n(t) - X(t) |^2\big] \leq E_n \Bigg(1+2\sqrt{\frac{E'_n}{E_n}} + \frac{E'_n}{E_n}\Bigg),
\end{align}
where
\begin{align*}
E_n  := \mathbb{E}\big[| X_n(t) - X'_n (t) |^2\big], \quad E'_n  := \mathbb{E}\big[| X(t) - X'_n (t) |^2\big].
\end{align*}
Using the It\=o isometry, and recalling that $\sigma$ is covariance stationary, we obtain
\begin{equation*}
\begin{split}
E'_n & = \sum_{k=1}^{N_n} \int_{t-\frac{k}{n}}^{t - \frac{k-1}{n}} g(t - s)^2 \mathbb{E}\bigg[\bigg(\sigma\bigg(t-\frac{k}{n}\bigg)-\sigma(s)\bigg)^2\bigg]  d s \\
& \leq \sup_{u \in (0,\frac{1}{n}]} \mathbb{E}\big[|\sigma(u)-\sigma(0)|^2\big] \int_0^\infty g(s)^2 ds
\end{split}
\end{equation*}
and
\begin{equation*}
\begin{split}
E_n & = \sum_{k=1}^\kappa \int_{t-\frac{k}{n}}^{t-\frac{k-1}{n}} \bigg((t-s)^\alpha L_g\bigg(\frac{k}{n}\bigg) - g(t-s)\bigg)^2\mathbb{E}\bigg[\sigma\bigg(t-\frac{k}{n} \bigg)^2\bigg] ds \\
& \quad + \sum_{k = \kappa + 1}^{n} \int_{t-\frac{k}{n}}^{t-\frac{k-1}{n}}\bigg(g\bigg(\frac{b_k}{n}\bigg) - g(t-s)\bigg)^2\mathbb{E}\bigg[\sigma\bigg(t-\frac{k}{n} \bigg)^2\bigg] ds \\
& \quad + \sum_{k = n + 1}^{N_n} \int_{t-\frac{k}{n}}^{t-\frac{k-1}{n}}\bigg(g\bigg(\frac{b_k}{n}\bigg) - g(t-s)\bigg)^2\mathbb{E}\bigg[\sigma\bigg(t-\frac{k}{n} \bigg)^2\bigg] ds \\
& \quad + \int_{-\infty}^{t - \frac{N_n}{n}} g(t-s)^2 \mathbb{E}[\sigma(s)^2] d s \\
& = \E[\sigma(0)^2](D_n + D'_n +D''_n + D'''_n),
\end{split}
\end{equation*}
where
\begin{align*}
D_n &:= \sum_{k=1}^\kappa \int_{\frac{k-1}{n}}^{\frac{k}{n}} \bigg( s^\alpha L_g \bigg(\frac{k}{n}\bigg)-g(s)\bigg)^2 ds,&
D'_n &:= \sum_{k=\kappa+1}^n \int_{\frac{k-1}{n}}^{\frac{k}{n}} \bigg(g\bigg(\frac{b_k}{n}\bigg) - g(s)\bigg)^2ds, \\
D_n'' &:=  \sum_{k=n+1}^{N_n}\int_{\frac{k-1}{n}}^{\frac{k}{n}} \bigg(g\bigg(\frac{b_k}{n}\bigg) - g(s)\bigg)^2ds, &
D_n''' &:= \int_{\frac{N_n}{n}}^{\infty} g(s)^2ds.
\end{align*}
(We may assume without loss of generality that $N_n > n >\kappa$, as this will be the case for large enough $n$.)
In what follows, we study the asymptotic behavior of the terms $D_n$, $D'_n$, $D''_n$ and $D'''_n$  separately, showing that $D_n$, $D''_n$ and $D'''_n$ are negligible in comparison with $D'_n$, and that $D'_n$ gives rise to the convergence rate given in Theorem \ref{th:mainTh}.

Let us analyze the terms $D'''_n$, $D''_n$ and $D_n$ first. By \eqref{ass:infb} and \eqref{ass:truncation}, we have
\begin{equation}\label{eq:cprimeprimea}
D_n''' = \mathcal{O}\Bigg(\bigg(\frac{N_n}{n}\bigg)^{2\beta +1}\Bigg) = \mathcal{O} \big(n^{\gamma(2 \beta+1)}\big), \quad n \rightarrow \infty.
\end{equation}
Regarding the term $D_n''$, recall that by \eqref{ass:deriv} there is $M>1$ such that $x \mapsto |g'(x)|$ is non-increasing on $[M,\infty)$. So, we have by the mean value theorem,
\begin{equation*}
\bigg| g\bigg(\frac{b_k}{n}\bigg) - g(s)\bigg| = |g'(\xi)|\bigg|\frac{b_k}{n}-s \bigg| \leq \begin{cases}
\frac{1}{n} \sup_{y \in [1,M]} |g'(y)|, & \frac{k-1}{n} < M,\\
\frac{1}{n}|g'\big( \frac{k-1}{n}\big)|, & \frac{k-1}{n} \geq M,
\end{cases}
\end{equation*}
where $\xi = \xi(\frac{b_k}{n},s) \in [\frac{k-1}{n},\frac{k}{n}]$.
Thus,
\begin{equation*}
\begin{split}
\limsup_{n \rightarrow \infty} n^2 D_n'' & \leq (M-1) \sup_{y \in [1,M]} g'(y)^2 +  \int_1^{\infty}g'(s)^2ds < \infty,
\end{split}
\end{equation*}
which implies
\begin{equation}\label{eq:cprime}
D_n'' = \mathcal{O}(n^{-2}), \quad n \rightarrow \infty.
\end{equation}
To analyze the behavior of $D_n$, we substitute $y=ns$ and invoke \eqref{ass:zero}, yielding
\begin{equation*}
\begin{split}
D_n & = \sum_{k=1}^\kappa \int_{k-1}^k \Bigg( \bigg(\frac{y}{n}\bigg)^\alpha L_g\bigg(\frac{k}{n}\bigg)- g\bigg(\frac{y}{n}\bigg)\Bigg)^2 \frac{dy}{n} \\
& = n^{-(2\alpha+1)}L_g(1/n)^2 \sum_{k=1}^\kappa \int_{k-1}^k y^{2\alpha} \bigg(\frac{L_g(k/n)}{L_g(1/n)}-\frac{L_g(y/n)}{L_g(1/n)} \bigg)^2 dy,
\end{split}
\end{equation*}
where, by Lemma \ref{lem:lemma2}\eqref{item:int2}, we have
\begin{equation*}
\lim_{n \rightarrow \infty}\int_{k-1}^k y^{2\alpha} \bigg(\frac{L_g(k/n)}{L_g(1/n)}-\frac{L_g(y/n)}{L_g(1/n)} \bigg)^2 dy = 0
\end{equation*}
for any $k = 1,\ldots,\kappa$.
Thus, we find that
\begin{equation}\label{eq:exact-negligible}
\lim_{n \rightarrow \infty} \frac{D_n}{n^{-(2\alpha+1)}L_g(1/n)^2} = 0.
\end{equation}

The asymptotic behavior of the term $D'_n$ is more delicate to analyze. By \eqref{ass:zero}, and substituting $y = ns$, we can write
\begin{align*}
D'_n &= \sum_{k=\kappa+1}^n \int_{k-1}^{k} \bigg(g\bigg(\frac{b_k}{n}\bigg) - g\bigg(\frac{y}{n}\bigg)\bigg)^2\frac{dy}{n} \\
	&= n^{-(2\alpha+1)} \sum_{k=\kappa+1}^n \int_{k-1}^{k} \bigg(b_k^{\alpha} L_g\bigg(\frac{b_k}{n}\bigg) - y^{\alpha}L_g\bigg(\frac{y}{n}\bigg)\bigg)^2 dy \\
	&= n^{-(2\alpha+1)} L_g(1/n)^2 \sum_{k=\kappa+1}^n A_{n,k},
\end{align*}
where
\begin{equation*}
A_{n,k} :=  \int_{k-1}^{k} \bigg(y^{\alpha} \frac{L_g(y/n)}{L_g(1/n)} - b_k^{\alpha}\frac{L_g(b_k/n)}{L_g(1/n)} \bigg)^2 dy.
\end{equation*}
Let us study the asymptotic behavior of the sum $\sum_{k=\kappa+1}^n A_{n,k}$ as $n \rightarrow \infty$. By Lemma \ref{lem:lemma2}, we have for any $k \in \N$,
\begin{equation*}
\lim_{n \rightarrow \infty} A_{n,k} = \int_{k-1}^k (y^{\alpha} - b_k^{\alpha})^2 dy<\infty.
\end{equation*}
To be able to then deduce, using the dominated convergence theorem, that
\begin{equation}\label{eq:Aconv}
\lim_{n \rightarrow \infty} \sum_{k=\kappa+1}^n A_{n,k} = \sum_{k=1}^\infty \int_{k-1}^k (y^{\alpha} - b_k^{\alpha})^2 dy = J(\alpha,\kappa,\mathbf{b}) < \infty,
\end{equation}
we seek a sequence $\{A_k\}_{k=\kappa+1}^{\infty} \subset [0,\infty)$ such that
\begin{align*}
0 \leq A_{n,k} \leq A_k, \quad k = \kappa+1,\ldots,n, \quad n \in \N.
\end{align*}
and that $\sum_{k=\kappa+1}^\infty A_k < \infty$. Let us assume, without loss of generality, that $\kappa = 0$.
Clearly, we may set $A_1 := \sup_{n \in \N} A_{n,1} < \infty$.
Consider now $k\geq 2$. The construction of $A_k$ in this case parallels some arguments seen in the proof of Proposition \ref{prop:localbehavior}, but we provide the details for the sake of clarity. By adding and substracting $b^{\alpha}_k \frac{L_g(y/n)}{L_g(1/n)}$ and using the inequality $(u+v)^2 \leq 2u^2 + 2v^2$, we get
\begin{equation}\label{eq:Adecomp}
\begin{split}
A_{n,k}   & = \bigg( y^\alpha \frac{L_g(y/n)}{L_g(1/n)} - b_k^\alpha \frac{L_g(y/n)}{L_g(1/n)} +  b_k^\alpha \frac{L_g(y/n)}{L_g(1/n)}  -  b_k^\alpha \frac{L_g(b_k/n)}{L_g(1/n)} \bigg)^2 
\\ &  \leq 2\int_{k-1}^k (y^{\alpha}-b^\alpha_k)^2 \bigg(\frac{L_g(y/n)}{L_g(1/n)}\bigg)^2 dy + 2 b^{2\alpha}_k\int_{k-1}^k\bigg(\frac{L_g(y/n) - L_g(b_k/n)}{L_g(1/n)}\bigg)^2dy =: I_{n,k} + I'_{n,k}.
\end{split}
\end{equation}
Recall that $\underline{L}_g := \inf_{x \in (0,1]} L_g(x)>0$, so by the estimates $b^{2\alpha}_k \leq \max \{k^{2\alpha},(k-1)^{2\alpha} \} \leq 2 (k-1)^{2\alpha}$, valid when $\alpha < \frac{1}{2}$, we obtain\begin{align*}
I'_{n,k}  \leq \frac{4}{\underline{L}_g^2} (k-1)^{2\alpha} \int_{k-1}^k \bigg(L_g(y/n) - L_g(b_k/n)\bigg)^2dy.
\end{align*}
Note that, thanks to \eqref{ass:zero} and the mean value theorem, 
\begin{align*}
|L_g(y/n) - L_g(b_k/n)| = |L_g'(\xi)| \bigg| \frac{y}{n} - \frac{b_k}{n}\bigg| \leq \frac{C(1+\xi^{-1})}{n}\leq \frac{C}{n} + \frac{C}{k-1} \leq \frac{2C}{k-1},
\end{align*}
where $\xi = \xi(y/n,b_k/n) \in [\frac{k-1}{n},\frac{k}{n}]$ and where the final inequality follows since $k-1 < n$.
Thus,
\begin{equation}\label{eq:Ibound1}
I'_{n,k}  \leq \frac{16 C^2}{\underline{L}_g^2} (k-1)^{2(\alpha-1)}.
\end{equation}
Moreover, 
the Potter bound \eqref{eq:potterbounds} and inequality \eqref{eq:powerbounds} imply
\begin{equation}\label{eq:Ibound2}
I_{n,k} \leq 2\alpha^2C^2_\delta \int_{k-1}^k (\min\{y,b_k \})^{2(\alpha-1)} y^{2\delta} dy \leq 2^{1+2\delta} \alpha^2C^2_\delta(k-1)^{2(\alpha-1+\delta)},
\end{equation}
where we choose $\delta \in (0,\frac{1}{2}-\alpha)$.
Applying the bounds \eqref{eq:Ibound1} and \eqref{eq:Ibound2} to \eqref{eq:Adecomp} shows that
\begin{equation*}
A_{n,k} \leq 2^{1+2\delta} \alpha^2C^2_\delta(k-1)^{2(\alpha-1+\delta)} + \frac{16 C^2}{\underline{L}_g^2} (k-1)^{2(\alpha-1)} =: A_k,
\end{equation*}
where $2(\alpha-1)<-1$ and $2(\alpha-1+\delta) < -1$ with our choice of $\delta$, so that $\sum_{k=1}^\infty A_k < \infty$. Thus, we have shown \eqref{eq:Aconv}, which in turn implies that
\begin{equation}\label{eq:c}
D'_n \sim J(\alpha,\kappa,\{b_k\}_{k=\kappa+1}^{\infty}) n^{-(2\alpha+1)}L_g(1/n)^2, \quad n \rightarrow \infty.
\end{equation}

We will now use the obtained asymptotic relations, \eqref{eq:cprimeprimea}, \eqref{eq:cprime}, \eqref{eq:exact-negligible} and \eqref{eq:c}, to complete the proof.
To this end, it will be convenient to introduce a relation $x_n \gg y_n$ for any sequences $\{x_n\}_{n=1}^\infty$ and $\{y_n\}_{n=1}^\infty$ of positive real numbers that satisfy $\lim_{n \rightarrow \infty}\frac{x_n}{y_n} = \infty$. By \eqref{eq:exact-negligible}, we have $D'_n \gg D_n$. Since $2\alpha+1 < 2$, we find that also $D'_n \gg D''_n$, in view of \eqref{eq:cprime}. The assumption $\gamma > -\frac{2\alpha+1}{2\beta + 1}$ is equivalent to $-(2\alpha+1) > \gamma (2\beta +1)$, so by the estimate \eqref{eq:potterzero} for slowly varying functions, we have $D'_n \gg D'''_n$.
 It then follows that $E_n \sim \E[\sigma(0)^2] D'_n$ as $n \rightarrow \infty$. Further, the condition \eqref{eq:sigmab1} implies that $E_n \gg E'_n$. In view of \eqref{eq:minkowski}, we finally find that $\mathbb{E}[| X_n(t) - X(t) |^2] \sim \E[\sigma(0)^2] D'_n$ as $n \rightarrow \infty$, which completes the proof.
\end{proof}

\subsection{Proof of Equation (\ref{eq:Wiener-cov})}\label{ssec:hyper}

In order to prove \eqref{eq:Wiener-cov}, we rely on the following integral identity for the Gauss hypergeometric function ${}_2 F_1$.

\begin{lemma}\label{lem:hyper} For all $\alpha  \in (-1,\infty)$ and $0 \leq a < b$,
\begin{equation*}
\int_0^a (a-x)^\alpha(b-x)^\alpha dx = \frac{a^{\alpha+1}b^\alpha}{\alpha+1} {}_2 F_1 \Big(-\alpha, 1,\alpha+2,\frac{a}{b}\Big).
\end{equation*}
\end{lemma}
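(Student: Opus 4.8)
The plan is to evaluate the integral $\int_0^a (a-x)^\alpha (b-x)^\alpha\,dx$ by a substitution that turns it into a standard Euler-type integral representation of ${}_2F_1$, namely the identity
\begin{equation*}
{}_2F_1(p,q,r,z) = \frac{\Gamma(r)}{\Gamma(q)\Gamma(r-q)}\int_0^1 u^{q-1}(1-u)^{r-q-1}(1-zu)^{-p}\,du,
\end{equation*}
valid for $\mathrm{Re}\,r > \mathrm{Re}\,q > 0$ (see, e.g., \cite[p.\ 59]{EMOT1953}). First I would substitute $x = a u$, so that $dx = a\,du$ and the limits become $u \in [0,1]$. This gives
\begin{equation*}
\int_0^a (a-x)^\alpha(b-x)^\alpha\,dx = a\int_0^1 a^\alpha(1-u)^\alpha\,(b - a u)^\alpha\,du = a^{\alpha+1}b^\alpha \int_0^1 (1-u)^\alpha\Big(1 - \tfrac{a}{b}u\Big)^\alpha\,du,
\end{equation*}
where I have factored $b^\alpha$ out of $(b-au)^\alpha = b^\alpha(1 - (a/b)u)^\alpha$; this step uses $0 \le a < b$, which guarantees $1 - (a/b)u > 0$ on $[0,1]$ so all the fractional powers are unambiguous.

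Next I would match the remaining integral $\int_0^1 (1-u)^\alpha (1 - (a/b)u)^\alpha\,du$ to the Euler integral above. Reading off exponents: I need $q - 1 = 0$, i.e.\ $q = 1$; then $r - q - 1 = \alpha$, i.e.\ $r = \alpha + 2$; and $-p = \alpha$, i.e.\ $p = -\alpha$, with $z = a/b$. The prefactor is $\frac{\Gamma(r)}{\Gamma(q)\Gamma(r-q)} = \frac{\Gamma(\alpha+2)}{\Gamma(1)\Gamma(\alpha+1)} = \alpha+1$, using $\Gamma(\alpha+2) = (\alpha+1)\Gamma(\alpha+1)$. Hence $\int_0^1 (1-u)^\alpha(1-(a/b)u)^\alpha\,du = \frac{1}{\alpha+1}\,{}_2F_1(-\alpha,1,\alpha+2,a/b)$, and multiplying back by $a^{\alpha+1}b^\alpha$ yields exactly the claimed formula.

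I should check the hypotheses of the Euler representation: it requires $\mathrm{Re}\,r > \mathrm{Re}\,q > 0$, i.e.\ $\alpha + 2 > 1 > 0$, which holds precisely because $\alpha > -1$; and it requires $|z| < 1$ (or $z \le 1$ with care) — here $z = a/b \in [0,1)$ since $0 \le a < b$, so we are safely inside the disc of convergence, with the boundary case $a = 0$ handled trivially (both sides vanish, interpreting ${}_2F_1$ at $0$ as $1$). The only mild subtlety, and the one place to be slightly careful, is the case $a = b$ is excluded by hypothesis, so $z = a/b$ never reaches $1$ and no analytic continuation of ${}_2F_1$ beyond its series disc is needed; everything reduces to the elementary substitution plus a textbook integral identity. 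This is the main (and essentially only) obstacle, and it is a minor bookkeeping matter rather than a genuine difficulty.
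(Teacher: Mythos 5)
Your proposal is correct and follows essentially the same route as the paper's proof: the substitution $x = au$ (the paper uses $y = x/a$), factoring out $b^\alpha$, and Euler's integral representation of ${}_2F_1$ with parameters $(-\alpha,1,\alpha+2)$, the only cosmetic difference being that you evaluate the prefactor via $\Gamma(\alpha+2)=(\alpha+1)\Gamma(\alpha+1)$ while the paper passes through the beta function.
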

\begin{proof}
In the case $a=0$ the asserted identity becomes trivial, so we may assume that $a>0$.
Substituting $y = \frac{x}{a}$, we get
\begin{equation*}
\int_0^a (a-x)^\alpha(b-x)^\alpha dx = a^{\alpha + 1}b^\alpha\int_0^1 (1-y)^\alpha \Big(1- \frac{a}{b}y\Big)^\alpha dy.
\end{equation*}
Using Euler's formula \cite[p.\ 59]{EMOT1953}, we find that
\begin{equation*}
\int_0^1 (1-y)^\alpha \Big(1- \frac{a}{b}y\Big)^\alpha dy = \frac{\Gamma(1)\Gamma(\alpha+1)}{\Gamma(\alpha+2)} {}_2 F_1 \Big(-\alpha, 1,\alpha+2,\frac{a}{b}\Big),
\end{equation*}
where $\Gamma$ denotes the gamma function, observing that this step is valid since $\alpha + 2 > 1 > 0$ and $0 < \frac{a}{b} < 1$ under our assumptions. By the connection between the gamma function $\Gamma$ and the beta function $\mathrm{B}$ (see, e.g., \cite[p.\ 9]{EMOT1953}), we obtain further
\begin{equation*}
\frac{\Gamma(1)\Gamma(\alpha+1)}{\Gamma(\alpha+2)} = \mathrm{B}(1,\alpha+1)= \int_{0}^1 (1-x)^{\alpha} dx= \frac{1}{\alpha+1},
\end{equation*}
concluding the proof.
\end{proof}

\begin{proof}[Proof of Equation \eqref{eq:Wiener-cov}]
Let $\alpha \in (-\frac{1}{2},\frac{1}{2}) \setminus \{0 \}$ and let $j$,\ $k = 2,\ldots,\kappa+1$ be such that $j < k$. By the It\=o isometry, we have
\begin{equation*}
\begin{split}
\Sigma_{j,k} = \E[W^n_{0,j-1}W^n_{0,k-1}] & = \int_{0}^{\frac{1}{n}} \bigg( \frac{j-1}{n} -s\bigg)^\alpha\bigg( \frac{k-1}{n} -s\bigg)^\alpha ds \\ & = \frac{1}{n^{2\alpha+1}} \int_0^1 (j-1-x)^\alpha (k-1-x)^\alpha dx,
\end{split}
\end{equation*}
where we have substituted $x = ns$. The second integral above can now be expressed as
\begin{multline*}
\int_0^1 (j-1-x)^\alpha (k-1-x)^\alpha dx  \\  
\begin{aligned}
 & = \int_0^{j-1} (j-1-x)^\alpha (k-1-x)^\alpha dx
- \int_{0}^{j-2} (j-2-x)^\alpha (k-2-x)^\alpha dx \\
& = \frac{1}{\alpha+1} \Bigg((j-1)^{\alpha+1}(k-1)^\alpha {}_2 F_1 \bigg(-\alpha, 1,\alpha+2,\frac{j-1}{k-1}\bigg) \\
& \quad - (j-2)^{\alpha+1}(k-2)^\alpha {}_2 F_1 \bigg(-\alpha, 1,\alpha+2,\frac{j-2}{k-2}\bigg)\Bigg),
\end{aligned}
\end{multline*}
where the second equality follows from Lemma \ref{lem:hyper}, which is applicable to both integrals on the second line as $0 < j-1< k-1$ and $0 \leq j-2 < k-2$.
\end{proof}

\section*{Acknowledgements}
We would like to thank Heidar Eyjolfsson and Emil Hedevang for useful discussions regarding simulation of $\mathcal{BSS}$ processes and Ulises M\'arquez for assistance with symbolic computation. Our research has been supported by CREATES (DNRF78), funded by the Danish National Research Foundation, 
by Aarhus University Research Foundation (project ``Stochastic and Econometric Analysis of Commodity Markets") and
by the Academy of Finland (project 258042).

{\small 
\bibliographystyle{mikko}
\bibliography{mybib-v9}

\begin{thebibliography}{10}

\bibitem{ALV2007}
E.~Al\`os, J.~A. Le\'on and J.~Vives (2007). On the short-time behavior of the
  implied volatility for jump-diffusion models with stochastic volatility.
  \emph{Finance Stoch.} \emph{11}(4), 571--589.

\bibitem{AsmussenGlynn07}
S.~Asmussen and P.~W. Glynn (2007). \emph{Stochastic simulation: algorithms and
  analysis}, Springer, New York.

\bibitem{olegamma12}
O.~E. Barndorff-Nielsen (2012). Notes on the gamma kernel. Thiele Centre
  Research Report, No. 03, May 2012, available at
  \url{http://data.math.au.dk/publications/thiele/2012/math-thiele-2012-03.pdf}.

\bibitem{ole_fred_almut13}
O.~E. Barndorff-Nielsen, F.~E. Benth and A.~E.~D. Veraart (2013). Modelling
  energy spot prices by volatility modulated {L{\'e}vy}-driven {Volterra}
  processes. \emph{Bernoulli} \emph{19}(3), 803--845.

\bibitem{ole_jose_mark11}
O.~E. Barndorff-Nielsen, J.~M. Corcuera and M.~Podolskij (2011). Multipower
  variation for {Brownian} semistationary processes. \emph{Bernoulli}
  \emph{17}(4), 1159--1194.

\bibitem{ole_jose_mark13}
O.~E. Barndorff-Nielsen, J.~M. Corcuera and M.~Podolskij (2013). Limit theorems
  for functionals of higher order differences of {B}rownian semistationary
  processes, in A.~N. Shiryaev, S.~R.~S. Varadhan and E.~Presman (Eds.)
  \emph{Prokhorov and Contemporary Probability Theory}, pp. 69--96, Springer,
  Berlin.

\bibitem{ole_mikko_jurgen13}
O.~E. Barndorff-Nielsen, M.~S. Pakkanen and J.~Schmiegel (2014). Assessing
  relative vol\-a\-til\-i\-ty/in\-ter\-mit\-tency/energy dissipation.
  \emph{Electron. J. Stat.} \emph{8}(2), 1996--2021.

\bibitem{ambitTC07}
O.~E. Barndorff-Nielsen and J.~Schmiegel (2007). Ambit processes: with
  applications to turbulence and tumour growth, in F.~E. Benth, G.~Di~Nunno,
  T.~Lindstr\o{}m, B.~\O{}ksendal and T.~Zhang (Eds.) \emph{Stochastic analysis
  and applications}, volume~2 of \emph{Abel Symp.}, pp. 93--124, Springer,
  Berlin.

\bibitem{ole-jurgen09}
O.~E. Barndorff-Nielsen and J.~Schmiegel (2009). Brownian semistationary
  processes and volatility/in\-ter\-mit\-tency, in H.~Albrecher, W.~J.
  Runggaldier and W.~Schachermayer (Eds.) \emph{Advanced financial modelling},
  volume~8 of \emph{Radon Ser. Comput. Appl. Math.}, pp. 1--25, Walter de
  Gruyter, Berlin.

\bibitem{BFG2015}
C.~Bayer, P.~K. Friz and J.~Gatheral (2016). Pricing under rough volatility.
  \emph{Quant. Finance} \emph{16}(6), 887--904.

\bibitem{bennedsen15}
M.~Bennedsen (2017). A rough multi-factor model of electricity spot prices.
  \emph{Energ. Econ.} \emph{63}, 301--313.

\bibitem{bennedsen14a}
M.~Bennedsen, A.~Lunde and M.~S. Pakkanen (2014). Discretization of {L}\'evy
  semistationary processes with application to estimation. Working paper,
  available at \url{http://arxiv.org/abs/1407.2754}.

\bibitem{heidar13fourier}
F.~E. Benth, H.~Eyjolfsson and A.~E.~D. Veraart (2014). Approximating {L}\'evy
  semistationary processes via {F}ourier methods in the context of power
  markets. \emph{SIAM J. Financial Math.} \emph{5}(1), 71--98.

\bibitem{BeskosRoberts05}
A.~Beskos and G.~O. Roberts (2005). Exact simulation of diffusions. \emph{Ann.
  Appl. Probab.} \emph{15}(4), 2422--2444.

\bibitem{BGT}
N.~H. Bingham, C.~M. Goldie and J.~L. Teugels (1989). \emph{Regular variation},
  Cambridge University Press, Cambridge.

\bibitem{jose_emil_mikko_mark13}
J.~M. Corcuera, E.~Hedevang, M.~S. Pakkanen and M.~Podolskij (2013). Asymptotic
  theory for {Brownian} semistationary processes with application to
  turbulence. \emph{Stochastic Process. Appl.} \emph{123}(7), 2552--2574.

\bibitem{EMOT1953}
A.~Erd{\'e}lyi, W.~Magnus, F.~Oberhettinger and F.~G. Tricomi (1953).
  \emph{Higher transcendental functions, {V}ol. {I}}, McGraw-Hill, New York.

\bibitem{Fu2016}
M.~Fukasawa (2017). Short-time at-the-money skew and rough fractional
  volatility. \emph{Quant. Finance} \emph{17}(2), 189--198.

\bibitem{gatheral2006}
J.~Gatheral (2006). \emph{The volatility surface: a practitioner's guide},
  Wiley, Hoboken.

\bibitem{GJR2014}
J.~Gatheral, T.~Jaisson and M.~Rosenbaum (2014). Volatility is rough. Working
  paper, available at \url{http://arxiv.org/abs/1410.3394}.

\bibitem{gneiting04}
T.~Gneiting and M.~Schlather (2004). Stochastic models that separate fractal
  dimension and the {H}urst effect. \emph{SIAM Rev.} \emph{46}(2), 269--282.

\bibitem{kallenberg}
O.~Kallenberg (2002). \emph{Foundations of modern probability}, 2nd edition,
  Springer, New York.

\bibitem{Mallat09}
S.~Mallat (2009). \emph{A wavelet tour of signal processing}, 3rd edition,
  Elsevier, Amsterdam.

\bibitem{Mishura2008}
Y.~S. Mishura (2008). \emph{Stochastic calculus for fractional {B}rownian
  motion and related processes}, Springer, Berlin.

\bibitem{PS2015}
J.~Pedersen and O.~Sauri (2015). On {L}\'evy semistationary processes with a
  gamma kernel, in R.~H. Mena, J.~C. Pardo, V.~Rivero and G.~Uribe~Bravo (Eds.)
  \emph{XI Symposium on Probability and Stochastic Processes}, pp. 217--239,
  Birkh\"auser, Basel.

\end{thebibliography}
}

\end{document}